\newtheorem{thm}{Theorem}     
\newtheorem{lem}{Lemma}[section]
\newtheorem{prop}[lem]{Proposition}     
\newtheorem{rem}{Remark}  
\newtheorem{defn}[lem]{Definition}
\newcommand{\Ekt}{\mathbb{E}(\kappa,\tau)}
\newcommand{\Ric}{\mathrm{Ric}}
\newcommand{\spin}{\mathrm{Spin}}
\newcommand{\C}{\mathbb{C}}
\newcommand{\Ss}{\mathbb{S}}
\newcommand{\HH}{\mathbb{H}}
\newcommand{\R}{\mathbb{R}}
\newcommand{\Chi}{\mathfrak{X}}
\newcommand{\pre}{\Re e}
\newcommand{\Spinc}{\mathrm{Spin^c}}
\newcommand{\spinc}{\mathrm{Spin^c}}
\newcommand{\beqt}{\begin{equation}}  \newcommand{\eeqt}{\end{equation}}
\newcommand{\bal}{\begin{align}}      \newcommand{\eal}{\end{align}}
\newcommand{\ba}{\begin{array}}      \newcommand{\ea}{\end{array}}
\newcommand{\bc}{\begin{center}}     \newcommand{\ec}{\end{center}}
\newcommand{\be}{\begin{enumerate}}  \newcommand{\ee}{\end{enumerate}}
\newcommand{\beq}{\begin{eqnarray}}  \newcommand{\eeq}{\end{eqnarray}}
\newcommand{\beQ}{\begin{eqnarray*}} \newcommand{\eeQ}{\end{eqnarray*}}
\newcommand{\bi}{\begin{itemize}}    \newcommand{\ei}{\end{itemize}}
\newcommand{\bt}{\begin{tabular}}    \newcommand{\et}{\end{tabular}}
\title{ Characterization of hypersurfaces in four dimensional product spaces via two different $\Spinc$ structures}
\author{Roger Nakad and Julien Roth}
\date{\today}
\begin{document}
\maketitle
\begin{abstract}  

    The Riemannian product $\mathbb M_1(c_1) \times \mathbb M_2(c_2)$, where  $\mathbb M_i(c_i)$ denotes the $2$-dimensional space form of constant sectional curvature $c_i \in \mathbb R$, has two different $\Spinc$ structures carrying each a parallel spinor.  The restriction of these two parallel spinor fields to a $3$-dimensional hypersurface $M$ characterizes the isometric immersion of $M$ into $\mathbb M_1(c_1) \times \mathbb M_2(c_2)$. As an application, we prove that totally umbilical hypersurfaces of $\mathbb M_1(c_1) \times \mathbb M_1(c_1)$ and totally umbilical hypersurfaces  of $\mathbb M_1(c_1) \times \mathbb M_2(c_2)$  ($c_1 \neq c_2$)  having a local structure product, are of constant mean curvature. 
\end{abstract} 
{\bf Keywords:} $\Spinc$ structures on hypersurfaces, totally umbilcal hypersurfaces, parallel $\Spinc$ spinors, generalized Killing $\Spinc$ spinors, K\"ahler manifolds.\\\\
{\bf Mathematics subject classifications (2010):} 53C27, 53C40, 53C80.

%\tableofcontents
\section{Introduction}

Over the past years, the real spinorial ($\spin$ geometry) and the complex spinorial ($\spinc$ geometry) approaches have been used fruitfully to characterize (\cite{Fr, Mo, Rot, blr, brj, blr13, LR, NR12, nr17}  and references therein)  submanifolds of some special ambient manifolds.  These approaches allowed also to study the geometry and  topology of submanifolds and solve naturally some  extrinsic problems. For instance, elementary proofs of the Alexandrov theorem in the Euclidean space \cite{hmz},  in the hyperbolic space \cite{hmr} and in the  Minkowski spacetime \cite{hmr} were obtained (see also \cite{HM13, HM14}). In 2006, O. Hijazi, S. Montiel and F. Urbano \cite{omu} constructed on K\"ahler-Einstein manifolds with positive scalar curvature, a
$\spinc$ structure carrying K\"ahlerian Killing spinors. The restriction of these spinors to minimal
Lagrangian submanifolds provides topological and geometric restrictions on these submanifolds. The authors \cite{nr17, nakadroth, NR12}, and
by restricting $\spinc$ spinors, gave an elementary $\spinc$ proof for a Lawson type correspondence between
constant mean curvature surfaces of $3$-dimensional homogeneous manifolds with $4$-dimensional
isometry group \cite{Da2}. Furthermore, they gave necessary and sufficient geometric conditions to immerse
a $3$-dimensional Sasaki manifold and a complex/Lagrangian surface into the complex projective
space of complex dimension $2$.\\

The main idea behind characterizing hypersurfaces of $\spin$ or $\spinc$ manifolds is the restriction to the hypersurface of a special spinor field (parallel, real Killing, imaginary Killing, K\"ahlerian Killing...). For example, the restriction $\phi$ of a parallel spinor field on a Riemannian $\spin$ or $\spinc$ manifold to an oriented  hypersurface $M$ is a solution of the generalized Killing equation
\begin{eqnarray}
\nabla_X\phi= -\frac 12 \gamma(II X) \phi,
\end{eqnarray}
where $\gamma$ and $\nabla$ are respectively the Clifford multiplication and the $\spin$ or $\spinc$ connection on $M$, the tensor $II$ is the Weingarten tensor of the immersion and $X$ any vector field on $M$. Conversely and in the two-dimensional case, the existence of a generalized Killing $\spin$ spinor field allows to immerse $M$ in $\mathbb R^3$ \cite{Fr}.  This characterization has been  extended  to surfaces of  other $3$-dimensonal (pseudo-) Riemannian manifolds \cite{Mo, Rot, LR11}. Moreover, the existence of a generalized Killing $\spinc$ spinor  on a surface $M$ allows to immerse $M$ in the $3$-dimensional homogeneous manifolds with 4-dimensional isometry group \cite{NR12}.  All these previous results are the geometrical invariant versions of previous  works  on  the  spinorial Weierstrass  representation by R.  Kusner  and  N.  Schmidt, B. Konoplechenko, I. Taimanov and many others (see \cite{kono, sch, ta}). \\

In the three dimensional case,  having a generalized Killing $\spin$ or $\spinc$ spinor  is not sufficient  to  characterize the immersion of $M$ in the desired $4$-dimensional manifold. The problem is that unlike in the $2$-dimensional case,  the spinor bundle of a $3$-dimensional manifold does not decompose into subbundles  of  positive  and  negative  half-spinors. In fact, Morel \cite{Mo} proved that the existence of  a Codazzi generalized Killing $\spin$ spinor on a $3$-dimensional manifold $M$ is equivalent to immerse $M$ in $\mathbb R^4$. But it was proved in \cite{blr13, roth14} that restricting a $\spin$ structure with a spinor field having non-vanishing positive and negative parts is required to get the integrability condition of an immersion in the desired $4$-dimensional target space. Hence, Morel's result has been reformulated for hypersurfaces of $\mathbb R^4$ \cite{LR} because $\mathbb R^4$ has a $\spin$ structure with  positive and negative parallel spinors. The restriction of both spinors to $M$ gives two generalized Killing spinors which, conversely, allow to characterize the immersion of $M$ in $\mathbb R^4$. This result has been extended to other 4-dimensional space forms and product spaces,  that is $\mathbb S^4$, $\mathbb H^4$, $\mathbb S^3 \times \mathbb R$ and $\mathbb H^3\times \mathbb R$ \cite{LR}. In the $\spinc$ setting, the existence of a Codazzi generalized Killing $\spinc$ spinor on a $3$-dimensional manifold $M$ is equivalent to immerse $M$ in the 2-dimensional complex space form  $\mathcal M_2 (k)$ of constant holomorphic sectional curvature $4k$ \cite{NR12}. However here, the condition ``Codazzi" cannot be dropped as in the $\spin$ case, because $\mathcal M_2 (k)$  has only two different $\spinc$ structures (the canonical and the anti-canonical $\spinc$ structures) carrying  each  one parallel spinor lying in the positive half-part of the corresponding $\spinc$ bundles. \\

The aim of the present article is to use $\spinc$ geometry to characterize hypersurfaces of the Riemannian product $\mathbb M_1(c_1) \times \mathbb M_2(c_2)$, where  $\mathbb M_i(c_i)$ denotes the $2$-dimensional space form of constant sectional curvature $c_i \in \mathbb R$. The key starting point is that this  product has two different $\spinc$ structures carrying each a non-vanishing  parallel spinor. The first structure $S_1$ is the product of the canonical $\spinc$ structure on $\mathbb M_1(c_1)$ with the canonical $\spinc$ structure on $\mathbb M_2(c_2)$  and it has a non-vanishing parallel spinor lying in the positive half-part of the $\spinc$ bundle. The second structure $S_2$ is  the product of the canonical $\spinc$ structure on $\mathbb M_1(c_1)$ with the anti-canonical $\spinc$ structure on $\mathbb M_2(c_2)$ and it has a  non-vanishing parallel spinor lying in the negative half-part of the $\spinc$ bundle. Having said that one could expect that restricting both structures $S_1$ and $S_2$, and hence both parallel spinors, to a hypersurface $M$ of $\mathbb M_1(c_1) \times \mathbb M_2(c_2)$ could allow to characterize the immersion. \\

We denote by $\nabla^j$, $\gamma_j$ and $i\Omega^j$  respectively the Clifford multiplication, the $\spinc$ connection and the curvature of the auxiliary line bundle  on the hypersurface $M$ obtained after restricting the $\spinc$ structure $S_j$ on $\mathbb M_{1} (c_1) \times \mathbb M_2(c_2)$ (here $j \in \{1, 2\}$). The main theorem of the paper is:
\begin{thm}\label{thmCM}
Let $\big(M^3,g=(.,.)\big)$ be a simply connected oriented Riemannian manifold endowed with an almost contact metric structure
 $(\Chi,\xi,\eta)$. Let $E$ be a  field of symmetric endomorphisms on $M$, $h$ a function on $M$ and  $V$ a vector field on $M$. Then, the following statements are equivalent:
\begin{enumerate}
\item There exists an isometric immersion of $(M^3,g)$ into $\mathbb M_{1} (c_1) \times \mathbb M_2(c_2)$ with shape operator $E$ and so that, over $M$, the complex structure of $\mathbb M_{1} (c_1) \times \mathbb M_2(c_2)$ is given by $J=\Chi+\eta(\cdot)\nu$, where $\nu$ is the unit normal vector of the immersion and the product structure  is given by $F = f + (V, \cdot) \nu$ for some endomorphism $f$ on $M$.
\item There exists two $\spinc$ structures on $M$ carrying each one a  non-trivial spinor $\varphi_1$ and $\varphi_2$  satisfying
$$\nabla^1_X\varphi_1=-\frac{1}{2}\gamma_1(EX)\varphi_1\ \ \ \text{and}\ \ \  \gamma_1(\xi)\varphi_1= - i\varphi_1.$$
$$\nabla^2_X\varphi_2=\frac{1}{2}\gamma_2(EX)\varphi_2\ \ \ \text{and}\ \ \ \ \gamma_2(V)  \varphi_2 = -i \gamma_2(\xi) \varphi_2 + h \varphi_2.$$
The curvature 2-form $i\Omega^j$ of the connection on  the auxiliary bundle associated with each $\spinc$ structure is given by ($j \in \{1, 2\}$)
$$\left\{
\begin{array}{l}
\Omega^j(e_1, e_2) = \frac 12 (-1)^{j-1}c_1 (h-1) -\frac 12 c_2 (h+1),\\
\Omega^j(e_1, \xi) = \frac 12 \Big( (-1)^{j-1} c_1 - c_2\Big) (e_1, V),\\
\Omega^j (e_2, \xi) = \frac 12 \Big( (-1)^{j-1} c_1 - c_2\Big) (e_2, V),
\end{array}
\right.
$$
in the basis $\{e_1,e_2=\Chi e_1,e_3=\xi\}$.
%\item There exists 2 $\spinc$ structures on $M$ carrying each one a  non-trivial spinor $\varphi_1$ and $\varphi_2$ of constant norms and satisfying
%$$D^1\varphi_1=\frac{3}{2}H\varphi_1\ \ \ \text{and} \ \ \ \gamma_1(\xi)\varphi_1= - i\varphi_1.$$
%$$D^2\varphi_2=-\frac{3}{2}H\varphi_2\ \ \ \text{and} \ \ \ \gamma_2(V)  \varphi_2 = -i \gamma_2(\xi) \varphi_2 + h \varphi_2.$$
%The curvature 2-form of the connection on  the auxiliary bundle associated with these two  $\spinc$ structure are given by ($j \in \{1, 2\}$) 
%$$\Omega^j(e_1, e_2) = \frac 12 (-1)^{j-1}c_1 (h-1) -\frac 12 c_2 (h+1)$$
%$$\Omega^i (e_1, \xi) = \frac 12 \Big( (-1)^{j-1} c_1 - c_2\Big) (e_1, V)$$
%$$\Omega^j (e_2, \xi) = \frac 12 \Big( (-1)^{j-1} c_1 - c_2\Big) (e_2, V)$$
%in the basis $\{e_1,e_2=\Chi e_1,e_3=\xi\}$.
\end{enumerate}
\end{thm}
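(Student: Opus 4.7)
The plan is to establish the two implications separately. The direct implication $(1) \Rightarrow (2)$ restricts the two parallel $\spinc$ spinors on $\mathbb M_1(c_1) \times \mathbb M_2(c_2)$ to the hypersurface, whereas the converse reconstructs the immersion from the spinorial data via the compatibility equations for hypersurfaces in a Riemannian product of space forms.

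For $(1) \Rightarrow (2)$, I would restrict the two ambient $\spinc$ structures $S_1$ and $S_2$ to $M$ and apply the spin Gauss formula to the respective parallel spinor $\Psi_j$. This immediately yields a generalized Killing equation for the restriction $\varphi_j$, with sign $-$ for $\Psi_1$ (lying in the positive half-bundle) and sign $+$ for $\Psi_2$ (lying in the negative half). The algebraic identity $\gamma_1(\xi)\varphi_1=-i\varphi_1$ then follows from decomposing the complex structure $J$ of the ambient along $TM \oplus \R\nu$ and using that a parallel spinor on a K\"ahler manifold is a Clifford eigenvector of the K\"ahler form. An analogous decomposition of the product structure $F=f+(V,\cdot)\nu$, combined with the fact that the Clifford action associated with $F$ differs between $S_1$ and $S_2$ by a sign on the second factor, yields $\gamma_2(V)\varphi_2=-i\gamma_2(\xi)\varphi_2+h\varphi_2$. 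Finally, the explicit formulas for $i\Omega^j$ are obtained from the Ricci form of the canonical, respectively anti-canonical, line bundle on $\mathbb M_1(c_1) \times \mathbb M_2(c_2)$ --- a combination of the K\"ahler forms of the two factors --- reexpressed in terms of $J$ and $F$ and read off in the basis $\{e_1,\Chi e_1,\xi\}$ of $TM$.

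For $(2) \Rightarrow (1)$, I would proceed in three stages. First, differentiating the algebraic relations satisfied by $\varphi_1$ and $\varphi_2$ along an arbitrary tangent direction and combining with the generalized Killing equations produces expressions for the covariant derivatives of $\xi$, $\eta$, $\Chi$, $V$, $h$ and $f$; these are precisely the structural identities that an almost contact metric structure together with $(f,V,h)$ must satisfy when it comes from a hypersurface in a product of space forms. Second, computing $R^{\nabla^j}(X,Y)\varphi_j$ in two ways --- by iterating the Killing equation, and via the standard identity linking the spinorial curvature to the Ricci curvature of $M$ plus the auxiliary curvature $i\Omega^j$ --- yields the Gauss equation, the Codazzi equation for $E$, and the compatibility relations for $f$, $V$, $h$. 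Third, the resulting full set of Gauss--Codazzi--Ricci-type equations triggers the fundamental theorem for hypersurfaces in a product of space forms (Lira--Tojeiro--Vit\'orio, Kowalczyk), producing the isometric immersion. The main obstacle is that only two spinors must simultaneously encode the metric, the shape operator $E$, the complex structure $J$ and the product structure $F$ of the ambient; the delicate step is showing that the combination of the two generalized Killing equations together with the specific algebraic condition on $\varphi_2$ is strong enough to recover the complete compatibility equations for the immersion, in particular the Ricci-type equation for the normal bundle which must be read off from the auxiliary curvatures $i\Omega^1$ and $i\Omega^2$.
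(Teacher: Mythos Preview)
Your outline follows the paper's strategy closely and is essentially correct. A few points where the paper is sharper than your sketch and which you should make explicit:

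\textbf{Construction of $f$.} In assertion~(2) the only data are $(\Chi,\xi,\eta)$, $E$, $V$, $h$; the endomorphism $f$ appears only in assertion~(1). In the converse direction the paper \emph{defines} $f$ on the adapted frame by $(fe_1,e_1)=(fe_2,e_2)=-h$, $(fe_1,e_2)=0$, $(f\xi,e_1)=(V,e_2)$, $(f\xi,e_2)=-(V,e_1)$, and then checks that (\ref{Structure1})--(\ref{Structure3}) follow from the algebraic relation $\gamma_2(V)\varphi_2=-i\gamma_2(\xi)\varphi_2+h\varphi_2$. Your stage~1 treats $f$ as if it were given; you should say how it is built.

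\textbf{Why two structures are really needed.} Your stage~2 says the spinorial curvature computation ``yields the Gauss equation, the Codazzi equation\dots''. The paper's mechanism is finer: for each $j$ separately, the Ricci identity applied to $\varphi_j$ produces a linear system (System~1, System~2) in which Gauss is \emph{equivalent} to Codazzi, not implied independently (Propositions~\ref{GGCC} and~\ref{GGCC2}). It is only by combining the two systems --- which differ because $\Omega^1\neq\Omega^2$ --- that one can solve for Gauss and Codazzi simultaneously. This is the precise content of ``solving System~1 and System~2 together'' and is the reason a single generalized Killing $\spinc$ spinor does not suffice in dimension~3.

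\textbf{No normal Ricci equation.} Since the codimension is one, there is no Ricci equation for the normal bundle; drop that remark. What the auxiliary curvatures $i\Omega^1$, $i\Omega^2$ encode is exactly the extra information needed to separate the coupled Gauss--Codazzi relations, not a normal-bundle condition. Finally, do not forget the rank-$2$ check on $\tfrac{1}{2}(F\pm\mathrm{Id})$ before invoking Kowalczyk / De~Lira--Tojeiro--Vit\'orio.
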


Again, these two $\spinc$ structures (resp. two generalized Killing $\spinc$ spinors) on $M$ comes from the restriction of the two $\spinc$ structures $S_1$ and $S_2$ (resp. the two parallel spinors) on $\mathbb M_{1} (c_1) \times \mathbb M_2(c_2)$. Needless to say,  when $c_1 = c_2=0$, these two $\spinc$ structures on $M$ coincide and it is in fact the $\spin$ structure coming from the restriction of  the unique $\spin$ structure on $\mathbb R^4$ having positive and negative parallel spinors.  When $c_1\neq 0$ or $c_2 \neq 0$, the two structures in $M$ are different because they are the restriction of the two different structures $S_1$ and $S_2$ .\\

 As an application of Theorem \ref{thmCM},  we prove that totally umbilical hypersurfaces of $\mathbb M_1(c_1) \times \mathbb M_1(c_1)$ and totally umbilical hypersurfaces  of $\mathbb M_1(c_1) \times \mathbb M_2(c_2)$  ($c_1 \neq c_2$)  having a local structure product are of constant mean curvature (see Proposition \ref{pr1} and Proposition \ref{pr2}).

\section{Preliminaries}
In this section we briefly introduce basic facts about $\spinc$ geometry of hypersurfaces (see \cite{LM, montiel, fr1, r1, r2}). Then we describe hypersurfaces of the Riemannian product $\mathbb M_1(c_1) \times \mathbb M_2(c_2)$ \cite{Ko, LTV}, where  $\mathbb M_i(c_i)$ denotes the $2$-dimensional space form of constant sectional curvature $c_i \in \mathbb R$. 

\subsection{Hypersurfaces and induced Spin$^c$ structures}

\underline{{\bf Spin$^c$ structures on manifolds:}} Let $(N^{n+1}, \overline g)$ be a Riemannian $\spinc$ manifold of dimension $n+1\geq 2$ without boundary. On such a manifold, we have  a Hermitian complex vector bundle $\Sigma  N$ endowed with a natural scalar product $(., .)$ and with a connection $\nabla^N $ which parallelizes the metric. This complex vector bundle, called the $\Spinc$ bundle, is endowed with a Clifford multiplication denoted by $``\cdot"$, $\cdot: TN \rightarrow \mathrm{End}_{\mathbb C} (\Sigma N)$, such that at every point $x \in N$, defines an irreducible representation of the corresponding Clifford algebra. Hence, the complex rank of $\Sigma N$ is $2^{[\frac {n+1}{2}]}$. Given a $\Spinc$ structure on $(N^{n+1}, \overline g)$, one can prove that the determinant line bundle $\mathrm{det} (\Sigma N)$ has a root of index $2^{[\frac{n+1}{2}]-1}$. We denote by $L^N$ this root line bundle over $N$ and call it the auxiliary line bundle associated with the $\Spinc$ structure. Locally, a Spin structure always exists. We denote by $\Sigma' N$ the (possibly globally non-existent)
spinor bundle. Moreover, the square root of the auxiliary line bundle $L^N$
always exists locally. But, $\Sigma N = \Sigma' N \otimes (L^N)^{\frac 12}$ exists globally.  This essentially means that, while the spinor bundle and $(L^N)^{\frac 12}$
may not exist globally, their tensor product (the $\Spinc$  bundle) is
defined globally. Thus, the connection $\nabla^N$ on $\Sigma N$ is the twisted connection of the one on the
spinor bundle (coming from the Levi-Civita connection, also denoted by $\nabla^N$) and a fixed connection on $L^N$. \\

We may now define the Dirac operator $D^N$ acting on the space of smooth
sections of $\Sigma N$  by the composition of the metric connection and
the Clifford multiplication. In local coordinates this reads as
\begin{align*}  D^N =\sum_{j=1}^{n+1} e_j \cdot  \nabla^N_{e_j},
 \end{align*}
where $\{e_1,\ldots,e_{n+1}\}$ is a local oriented orthonormal tangent frame. It is a first order elliptic operator, formally self-adjoint with respect to the $L^2$-scalar product and satisfies, for any spinor field $\psi$, the Schr\"odinger-Lichnerowicz formula 
\begin{eqnarray}
(D^N)^2\psi= (\nabla^N)^* \nabla^N\psi+\frac{1}{4}S\psi+\frac{i}{2}\Omega^N\cdot\psi,
%D^2\psi=\frac12 \Delta \vert\psi\vert^2 + \vert\nabla \psi\vert^2 +
%\frac S4 \vert\psi\vert^2 +\frac 12 (i\Omega\cdot\psi, \psi),
\label{bochner}
\end{eqnarray}
where $S$ is the scalar curvature of $N$,  $ (\nabla^N)^*$ is the adjoint of $\nabla^N$ with respect to the $L^2$ scalar product, $i\Omega^N$ is the curvature of the fixed connection on the auxiliary line bundle $L^N$ ($\Omega^N$ is a real $2$-form on $N$) and  $\Omega^N\cdot$ is the extension of the Clifford multiplication to differential forms. For any $X \in \Gamma(TN)$, the Ricci identity is given by
\begin{eqnarray}
\sum_{k=1}^{n+1} e_k \cdot \mathcal{R}^N(e_k,X)\psi =
\frac 12 \Ric^N(X) \cdot \psi -\frac i2 (X\lrcorner\Omega^N)\cdot\psi,
\label{RRicci}
\end{eqnarray}
where $\Ric^N$ is the Ricci curvature of $(N^{n+1}, \overline g)$ and $\mathcal{R}^N$ is the curvature tensor of the spinorial connection $\nabla^N$. In odd dimension, the volume form $\omega_{\C} := i^{[\frac{n+2}{2}]} e_1 \cdot...\cdot e_{n+1}$ acts on $\Sigma N$ as the identity, i.e., $\omega_\C \cdot\psi = \psi$ for any spinor $\psi \in \Gamma(\Sigma N)$. Besides, in even dimension, we have $\omega_\C^2 =1$. We denote by $\Sigma^\pm N$ the eigenbundles corresponding to the eigenvalues $\pm 1$, hence $\Sigma N= \Sigma^+ N\oplus \Sigma^- N$ and a  spinor field $\psi$ can be written $\psi = \psi^+ + \psi^-$. The conjugate $\overline \psi$ of $\psi$ is defined  by $\overline \psi = \psi^+ - \psi^-$.\\

Every $\spin$ manifold has a trivial $\spinc$ structure \cite{fr1}. In fact, we
choose the trivial line bundle with the trivial connection whose curvature 
is zero. Also every K\"ahler manifold $(N, J, \overline g)$ of complex dimension $m$ ($n+1=2m$) has a canonical $\spinc$ structure coming from the complex structure $J$. Let $\ltimes$ be the K\"{a}hler form defined by the complex structure $J$, i.e. $\ltimes (X, Y)= \overline g(JX, Y)$ for all vector fields $X,Y\in \Gamma(TN).$ The complexified tangent bundle $T^\C N =TN \otimes_\R \C$ decomposes into
$$T^\C N = T_{1,0} N\oplus T_{0,1} N,$$
where  $T_{1,0} N$ (resp. $T_{0,1} N$)  is the $i$-eigenbundle (resp. $-i$-eigenbundle) of the complex linear extension of the complex structure. Indeed,
$$T_{1,0}N = \overline{T_{0,1}N} = \{ X- iJX\ \ | X\in \Gamma(TN)\}.$$
Thus, the spinor bundle of the canonical $\spinc$ structure is given by $$\Sigma N = \Lambda^{0,*} N =\bigoplus_{r=0}^m \Lambda^r (T_{0,1}^* N),$$
where $T_{0,1}^*N$ is the dual space of  $T_{0,1} N$. The auxiliary bundle of this canonical $\spinc$ structure 
is given by  $L^N = (K_{N})^{-1}= \Lambda^m (T_{0,1}^* N)$, where $K_{N}= \Lambda^m (T_{1,0}^* N)$ is the canonical bundle of $N$ \cite{fr1}. This line bundle $ L^N$ has a canonical holomorphic connection
 induced from the Levi-Civita connection whose curvature form is given by $i\Omega^N = -i\rho$, where $\rho$ is the Ricci form given by $\rho(X, Y) = \Ric^N(JX, Y)$ for all $X, Y \in \Gamma(TN)$. Hence, this $\spinc$ structure carries  parallel spinors (the constant complex functions) lying in the set of complex functions $\Lambda^{0, 0}N\subset \Lambda^{0, *} N$  \cite{Moro1}.
Of course, we can define another $\spinc$ structure for which the spinor bundle is given by 
$\Lambda^{*, 0} N =\bigoplus_{r=0}^m \Lambda^r (T_{1, 0}^* N)$ and the auxiliary line bundle by $K_{N}$.  This $\spinc$ structure will be called the anti-canonical $\spinc$ structure \cite{fr1} and 
it carries  also parallel spinors (the constant complex functions) lying in the set of complex functions $\Lambda^{0, 0}N\subset \Lambda^{0, *} N$  \cite{Moro1}.\\

For any other $\spinc$ structure on the K\"ahler manifold $N$, the spinorial bundle can be written as \cite{fr1, omu}: $$\Sigma N = \Lambda^{0,*}N \otimes\mathfrak L,$$ where $\mathfrak L^2 = K_{N}\otimes  L^N$ and $L^N$  is the auxiliary bundle associated with this $\spinc$
structure. In this case, the $2$-form $\ltimes$ can be considered as an endomorphism of $\Sigma N$ via
 Clifford multiplication and 
it acts on a spinor field  $\psi$ locally by \cite{kirch, fr1}:
$$\ltimes\cdot\psi =  \frac 12 \sum_{j=1}^{m} e_j\cdot Je_j\cdot\psi.$$
Hence,  we have the well-known orthogonal splitting 
\begin{align}\label{splitt}\Sigma N = \bigoplus_{r=0}^{m}\Sigma_r N,
\end{align}
where $\Sigma_r N$ denotes the eigensubbundle corresponding 
to the eigenvalue $i(m-2r)$ of $\ltimes$, with complex rank $\Big(^m_k\Big)$. The bundle $\Sigma_r N$ corresponds
to $\Lambda^{0, r}N \otimes\mathfrak L$. Moreover,
$$\Sigma^+N = \bigoplus_{r \ \text{even}} \Sigma_r N\ \ \ \text{and} \ \ \ \Sigma^-N = \bigoplus_{r \ \text{odd}} \Sigma_r N.$$
For the canonical (resp. the anti-canonical) $\spinc$ structure, the subbundle $\Sigma_0 N$ (resp. $\Sigma_m N$) is trivial, i.e., $\Sigma_0 N = \Lambda^{0, 0} N \subset \Sigma^+N $ (resp. $\Sigma_m N= \Lambda^{0, 0}N$ which is in $\Sigma^+ N$ if $m$ is even and in $\Sigma^-N$ if $m$ is odd). \\

The product $N_1 \times N_2 $ of two K\"ahler  $\spinc$ manifolds is again a $\spinc$ manifold. We denote by $m_1$ (resp. $m_2$) the complex dimension of $N_1$ (resp. $N_2$). The spinor bundle is identified by $$\Sigma (N_1\times N_2) \simeq \Sigma N_1\otimes \Sigma N_2,$$
via the Clifford multiplication denoted also by ``$\cdot$":
$$(X_1 + X_2)\cdot(\psi_1 \otimes \psi_2) = X_1\cdot\psi_1  \otimes \psi_2 + \overline \psi_1 \otimes X_2\cdot\psi_2, $$
where $X_1 \in \Gamma(TM_1)$, $X_1 \in \Gamma(TM_2)$, $\psi_1 \in \Gamma (\Sigma M_1)$ and $\psi_2 \in \Gamma (\Sigma M_2)$.  We consider the decomposition (\ref{splitt}) of $\Sigma N_1$ and $\Sigma N_2$ with respect to their K\"ahler forms $\ltimes^{N_1}$ and $\ltimes^{N_2}$. Then, the corresponding decomposition of $\Sigma (N_1\times N_2)$ into eigenbundles of $\ltimes^{N_1\times N_2}= \ltimes^{N_1} + \ltimes^{N_2}$ is:
$$\Sigma (N_1\times N_2) \simeq \bigoplus_{k=0}^{m_1+m_2}\Sigma_r(N_1\times N_2),$$
with 
$$\Sigma_r (N_1\times N_2) \simeq \bigoplus_{k=0}^{r} \Sigma_kN_1 \otimes \Sigma_{r-k}N_2,$$
since the K\"ahler form $\ltimes^{N_1\times N_2}$ acts on a section of $\Sigma_kN_1 \otimes \Sigma_{r-k}N_2$ as
$$\ltimes^{N_1\times N_2}(\psi_1\otimes \psi_2) = \ltimes^{N_1}\cdot \psi_1 \otimes\psi_2  +\psi_1\otimes \ltimes^{N_2} \cdot\psi_2 = i (m_1+m_2-2r)\psi_1\otimes\psi_2.$$

%For any other $\spinc$ structure the spinorial bundle can be written as \cite{fr1, omu}:
%$$\Sigma M =  \Lambda^{0, *} M \otimes \mathcal L,$$
% where $\mathcal L^2 = K_M\otimes L$ and  $L$ is the line bundle associated with this $\spinc$ structure. In this case, the 2-form $\Omega$ can be considered as an endomorphism of $\Sigma M$ via Clifford multiplication and it acts on a spinor $\psi$ locally by \cite{fr1, kirch, oussama1}:
%$$\Omega\cdot \psi = \frac{1}{2} \sum_{j=1}^n e_j\cdot Je_j\cdot\psi,$$
%where $\{e_1, e_2,...,e_n\}$ is any local orthonormal frame of tangent vector fields. Moreover, we have the well-known orthogonal splitting
%$$\Sigma M = \oplus_{r=0}^m \Sigma_r M,$$
%where $\Sigma_r M$ denotes the eigensubbundle corresponding to the eigenvalue $i(2r-m)$ of $\Omega$, with complex rank $\Big(^m_k\Big)$. It %is easy to see that $\Sigma_r M \subset \Sigma^+ M$ (resp. $\Sigma_r M \subset \Sigma^-M$) if and only if $r$ is even (resp. $r$ is odd). For %the canonical $\spinc$ structure, we should point out that $\Sigma_0 M$ is trivial \cite{fr1} ($\Sigma_0 M$ is the set of complex functions %on $M$) and so this $\spinc$ structure carries  parallel spinors (the constant functions) lying in $\Sigma_0M$ \cite{Moro1}.\\\\
\underline{{\bf Spin$^c$ hypersurfaces and the Gauss formula:}} Let $N$ be an oriented ($n+1$)-dimensional Riemannian $\spinc$ manifold and $M\subset N$ be an oriented hypersurface. The manifold $M$ inherits a $\spinc$
structure induced from the one on $N$, and we have \cite{r2}
$$ \Sigma M\simeq \left\{
\begin{array}{l}
\Sigma N_{|_M} \ \ \ \ \ \ \text{\ \ \ \ \ \ \ \ \ \ \ \ \ \ \ \ \ if\ $n$ is even,} \\\\
 \Sigma^+ N_{|_M}  \ \text{or}\ \   \Sigma^- N_{|_M}\ \text{\ if\ $n$ is odd.}
\end{array}
\right.
$$
Moreover the Clifford multiplication by a vector field $X$, tangent to $M$, is denoted by $\gamma$ and given by 
\begin{eqnarray}
\gamma(X)\phi = (X\cdot\nu\cdot \psi)_{|_M},
\label{Clifford}
\end{eqnarray}
where $\psi \in  \Gamma(\Sigma N)$ (or $\psi \in \Gamma(\Sigma^+ N)$ if $n$ is odd),
$\phi$ is the restriction of $\psi$ to $M$, ``$\cdot$'' is the Clifford
multiplication on $N$, $\gamma$ that on $M$ and $\nu$ is the unit inner normal
vector. If   $\psi \in \Gamma(\Sigma^- N)$ when $n$ is odd, then we have 
\begin{eqnarray}
\gamma(X)\phi = - (X\cdot\nu\cdot \psi)_{|_M}.
\label{Clifford-}
\end{eqnarray}
The curvature 2-form $i\Omega$ on the auxiliary line bundle $L=L^N_{\vert_M}$ defining the $\Spinc$ structure on $M$ is given by $i\Omega= {i\Omega^N}_{|_M}$. For every
$\psi \in \Gamma(\Sigma N)$ ($\psi \in \Gamma(\Sigma^+ N)$ if $n$ is odd), the real 2-forms
$\Omega$ and $\Omega^N$ are related by \cite{r2}:
\begin{eqnarray}
(\Omega^N \cdot\psi)_{|_M} = \gamma(\Omega)\phi -
\gamma(\nu\lrcorner\Omega^N)\phi.
\label{glucose}
\end{eqnarray}
When $\psi \in \Gamma(\Sigma^-N)$ if $n$ is odd, we have
\begin{eqnarray}
(\Omega^N \cdot\psi)_{|_M} = \gamma(\Omega)\phi +
\gamma(\nu\lrcorner\Omega^N)\phi.
\label{g-}
\end{eqnarray}
We denote by $\nabla$ the spinorial Levi-Civita connection on $\Sigma
M$. For all $X\in \Gamma(TM)$ and $\psi \in \Gamma(\Sigma^+N)$, we have the $\Spinc$ Gauss formula \cite{r2}:
\begin{equation}
(\nabla^{\Sigma N}_X\psi)_{|_M} =  \nabla_X \phi + \frac 12 \gamma(II X)\phi,
\label{spingauss}
\end{equation}
where $II$ denotes the Weingarten map of the hypersurface. If  $\psi \in \Gamma(\Sigma^-N)$, we have 
\begin{equation}
(\nabla^{\Sigma N}_X\psi)_{|_M} =  \nabla_X \phi - \frac 12 \gamma( II X)\phi,
\label{spingauss-}
\end{equation}
for all $X \in \Gamma(TM)$.
%%%%%%%%%%%%%%%%%%%%%%%%%%%%%
%%%%%%%%%%%%%%%

\subsection{Basic facts about $\mathbb M_1(c_1) \times \mathbb M_2(c_2)$ and their real hypersurfaces}
\label{cm}
Let ($\mathbb M_1(c_1) \times \mathbb M_2(c_2), \overline g)$ be the  Riemannian product of $\mathbb M_1(c_1)$ and $ \mathbb M_2(c_2)$, where $M_i(c_i)$ denotes the space form of constant sectional curvature $c_i$ and $\overline g$ denotes the product metric. Consider $\big(M^3, g=(.,.)\big)$  an oriented real hypersurface of $\mathbb M_1(c_1) \times \mathbb M_2(c_2)$  endowed with the metric $g := (\cdot, \cdot)$ induced by $\overline{g}$. The product structure  of $\mathbb P:= \mathbb M_1(c_1) \times \mathbb M_2(c_2)$ is given by the  map $F : T \mathbb P \rightarrow T \mathbb P$  defined, for  $X_1 \in \Gamma(T \mathbb M_1(c_1))$ and
$X_2 \in \Gamma(\mathbb TM_2(c_2))$, by 
\begin{eqnarray}
F(X_1 + X_2) = X_1 - X_2.
\end{eqnarray}
The map $F$ satisfies $F^2 = \mathrm{Id}_{T\mathbb P}, F \neq \mathrm{Id}_{T\mathbb P}$, where $\mathrm{Id}_{T\mathbb P}$ denotes the identity map on $T\mathbb P$. Denoting the Levi-Civita connection on $\mathbb P$ by $\nabla^{\mathbb P}$, we have $\nabla^{\mathbb P} F =0$ and for any $X, Y \in \Gamma(T\mathbb P)$, we have 
$\overline g(FX, Y) = \overline g (X, FY)$.
%The curvature of  $(P, \widetilde g)$ is given by 
%\begin{eqnarray}
%\tilde R (X, Y) Z &=& \frac c4 \big\{ (Y, Z)X - (X, Z) Y + (FY, Z) FX - (FX, Z)FY \\ 
%&& +(Y, Z)FX- (X, Z)FY+ (Y, FZ)X - (X,FZ) Y\big\}
%\end{eqnarray} 
The product structure $F$ induces the existence  on $M$ of a vector $V \in \Gamma(TM)$, a function $h: M \rightarrow \mathbb R$ and an endomorphism $f: TM \rightarrow TM$ such that, for all $X \in \Gamma(TM)$,
\begin{eqnarray}
FX = f X + (V, X) \nu \ \ \ \text{and}\ \ \ \ \ \ F\nu = V +h\nu,
\end{eqnarray}
where $\nu$ is the unit normal vector of the immersion. 
\begin{lem}\label{lem1} 
%\begin{enumerate}%[label=(\rm{\roman{*}}), ref=\rm{\roman{*}}]
The function $f$ is symmetric. Moreover, for any $X \in \Gamma(TM)$, we have 
\begin{eqnarray}f^2X + (V, X)V = X,\label{Structure1} 
\end{eqnarray} 
\begin{eqnarray}\label{Structure2} f V = -hV,
\end{eqnarray}
\begin{eqnarray} \label{Structure3} h^2 + \Vert V \Vert^2 = 1.\end{eqnarray}
%(fe_1, e_1) + (fe_2, e_2) = -2 h
%\begin{eqnarray}
%g(fe_2, fe_2) + (V, e_2)^2 = 1
%\end{eqnarray}
%\begin{eqnarray}
%g(fe_1, fe_1) + (V, e_1)^2 = 1
%\end{eqnarray}
\end{lem}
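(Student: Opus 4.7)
The plan is to exploit only two ambient properties of $F$: the involutivity $F^2 = \mathrm{Id}_{T\mathbb{P}}$ and the symmetry $\overline{g}(FX,Y) = \overline{g}(X,FY)$, together with the defining decomposition $FX = fX + (V,X)\nu$ and $F\nu = V + h\nu$.

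First I would establish the symmetry of $f$. For tangent vectors $X,Y \in \Gamma(TM)$, one has $\overline{g}(FX,Y) = \overline{g}(fX + (V,X)\nu, Y) = g(fX,Y)$ since $\nu \perp Y$, and similarly $\overline{g}(X,FY) = g(X,fY)$. The ambient symmetry of $F$ then yields $g(fX,Y) = g(X,fY)$, hence $f$ is symmetric. As a side benefit, applying the ambient symmetry to a pair $(X,\nu)$ gives $\overline{g}(FX,\nu) = (V,X)$ and $\overline{g}(X,F\nu) = (X,V)$, which is automatically consistent and confirms the coefficient of $\nu$ in $FX$ is $(V,X)$.

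Next I would compute $F^2 X$ by expanding twice:
\begin{align*}
F^2 X &= F(fX) + (V,X)\,F\nu \\
      &= f(fX) + (V,fX)\nu + (V,X)\bigl(V + h\nu\bigr) \\
      &= \bigl(f^2 X + (V,X)V\bigr) + \bigl((V,fX) + h(V,X)\bigr)\nu.
\end{align*}
Comparing with $F^2 X = X$, the tangential part yields $f^2 X + (V,X)V = X$, which is \eqref{Structure1}. The normal part gives $(V,fX) = -h(V,X)$ for all $X$; by the symmetry of $f$ just proved, this reads $(fV,X) = -h(V,X)$ for all tangent $X$, so $fV = -hV$, which is \eqref{Structure2}.

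Finally, applying $F$ twice to $\nu$:
\begin{align*}
F^2 \nu = F(V) + h\,F\nu = fV + (V,V)\nu + h\bigl(V + h\nu\bigr) = \bigl(fV + hV\bigr) + \bigl(\|V\|^2 + h^2\bigr)\nu.
\end{align*}
Using \eqref{Structure2}, the tangential term $fV + hV$ vanishes, and since $F^2\nu = \nu$ we must have $h^2 + \|V\|^2 = 1$, which is \eqref{Structure3}. There is no real obstacle here; the whole lemma is a bookkeeping exercise extracting tangent and normal components from $F^2 = \mathrm{Id}$ and $F^\top = F$, and the only point requiring care is to use the symmetry of $f$ at the right moment to convert the normal-component identity into \eqref{Structure2}.
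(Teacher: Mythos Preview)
Your proof is correct and follows essentially the same approach as the paper: symmetry of $f$ from $\overline g(FX,Y)=\overline g(X,FY)$, then the tangential and normal components of $F^2X=X$ yield \eqref{Structure1} and \eqref{Structure2}, and $F^2\nu=\nu$ gives \eqref{Structure3}. If anything, your version is slightly cleaner in making explicit the use of the symmetry of $f$ to pass from $(V,fX)=-h(V,X)$ to $fV=-hV$.
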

\begin{proof} First of all, for any $X, Y \in \Gamma(TM)$, we have
\begin{eqnarray*}
(fX, Y) &=& \overline g(fX, Y)= \overline g\big(FX - (V, X)\nu, Y\big) = \overline g(FX, Y) \\
&=& \overline g(X, FY) = \overline g(X, fY + (V, Y)\nu) = (X, fY).
\end{eqnarray*}
Hence $f$ is symmetric. For any $X \in \Gamma(TM)$, $F^2 X = X$. This means that
$$(f+ (V, X) \nu)^2 (X) = X,$$ and hence  
$$\left \{
\begin{array}{l}
f^2 X + (V, X) V = X,\\ 
(V, fX) + h(V, X) = 0,
\end{array}
\right.
$$
which are Equation (\ref{Structure1}) and Equation (\ref{Structure2}). We  also have $F^2 \nu = \nu$. Thus, 
$$V + (V, V) \nu  + h V + h^2 \nu = \nu.$$ This gives $\Vert V \Vert^2 + h^2 = 1$ which is Equation (\ref{Structure3}).
\end{proof}
Moreover, the complex structure $J=J_1+J_2$ on $\mathbb P$ (where $J_i$ denotes the complex structure on $\mathbb M_i(c_i)$) induces on $M$ an almost contact metric structure $\big(\Chi, \xi, \eta, g=(.,.)\big)$, where $\Chi$ is the $(1,1)$-tensor defined, for all $X,Y\in \Gamma(TM)$ by
$$(\Chi X,Y)=\overline{g}(JX,Y).$$
 The tangent vector field $\xi$  and the $1$-form $\eta$ associated with $\xi$ satisfy
 $$\xi=-J\nu\ \ \ \ \text{and}\ \ \ \ \ \ \eta(X)=(\xi,X),$$ 
for all $X\in\Gamma(TM)$. Then, we can easily see  that, for all $X\in\Gamma(TM)$, the following holds:
\begin{eqnarray}
J X = \Chi X + \eta(X) \nu, 
\end{eqnarray}
\beqt
\Chi^2X=-X+\eta(X)\xi,\quad g(\xi,\xi)=1,\quad\text{and}\quad \Chi\xi=0. \label{chi}
\eeqt
Here, we recall that given an almost contact metric structure $(\Chi, \xi, \eta, g)$ one can define a $2$-form 
$\varTheta$ by $\varTheta(X, Y) = g(X, \Chi Y)$ for all $X, Y\in \Gamma(TM)$. Now, $(\Chi, \xi, \eta, g)$ is said to satisfy 
the contact condition if $-2 \varTheta =d\eta$ and if it is the case, $(\Chi, \xi, \eta, g)$ is called a contact metric 
structure on $M$. A contact metric structure $(\Chi, \xi, \eta, g)$  is called a Sasakian structure (and $M$ a Sasaki manifold) if $\xi$ is a Killing vector field (or equivalently, $\Chi = \nabla \xi$) and 
$$(\nabla_X\Chi )Y = \eta (Y) X - g(X, Y) \xi, \ \ \text{for all} \ \ X, Y \in \Gamma(TM).$$
For $\mathbb P$, one can choose a local orthonormal frame $\{e_1, e_2 = \Chi e_1, \xi, \nu\}$ where $\{e_1, e_2 = \Chi e_1, \xi\}$ denotes a  local orthonormal frame of $M$.
\begin{lem}\label{Fo}
We have
%In the basis $\{e_1, e_2 = \Chi e_1, \xi\}$ of $M$, we have 
\begin{enumerate}[label=(\rm{\roman{*}}), ref=\rm{\roman{*}}]
\item\label{as1} $\Chi$ is antisymmetric on $TM$, i.e. $(\Chi e_1, e_2) = -(e_1, \Chi e_2)$ and $\Chi \xi = 0$
\item\label{as2}  $J \circ F = F \circ J$
\item\label{as3} $(V, \Chi X) + \eta(X) h = \eta(fX)$
\item \label{as4}$f\Chi X + \eta (X)V = \Chi fX - (V, X)\xi$
\item \label{as5}$\eta (V) = 0$
\item \label{as6}$f\xi = h\xi - \Chi V$
\item \label{as7}$\eta (fV) = 0$
\item \label{as8}$(fe_1, e_2) = 0$ and  $(fe_1, e_1) = (fe_2, e_2) = -h$
\item \label{as9}$JV = \Chi V$
\item \label{as10}$F\xi = f \xi$
%\item $(fe_1, e_1) + (fe_2, e_2) = -2h$
\end{enumerate}
\end{lem}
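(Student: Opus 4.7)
The strategy is to first nail down two structural identities (antisymmetry of $J$ and commutativity of $J$ with $F$), and then derive everything else by decomposing various relations into tangential and normal parts relative to the immersion.

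\textbf{Items \ref{as1} and \ref{as2}.} For \ref{as1}, antisymmetry of $\Chi$ follows from the antisymmetry of $J$ on the ambient K\"ahler manifold together with $(\Chi X,Y)=\overline g(JX,Y)$, and $\Chi\xi=0$ is immediate once one notes that $J\xi=J(-J\nu)=\nu$ has no tangential part. For \ref{as2}, one writes $X_1+X_2\in T\mathbb M_1(c_1)\oplus T\mathbb M_2(c_2)$ and checks $JF(X_1+X_2)=J_1X_1-J_2X_2=FJ(X_1+X_2)$ directly from the definitions of $F$ and $J=J_1+J_2$.

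\textbf{Items \ref{as3} and \ref{as4}.} These are the tangential/normal decomposition of the identity $JFX=FJX$ obtained in \ref{as2}. Substituting $FX=fX+(V,X)\nu$ and $JX=\Chi X+\eta(X)\nu$ into both sides, using $J\nu=-\xi$ and $F\nu=V+h\nu$, and equating tangential components yields \ref{as4} while equating normal components yields \ref{as3}.

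\textbf{Item \ref{as5}.} One computes $\eta(V)=-(J\nu,V)=-(J\nu,F\nu)$ since the $h\nu$ piece of $F\nu$ is normal; then the combination of $F$ symmetric, $J$ antisymmetric and $JF=FJ$ forces $(J\nu,F\nu)=-(J\nu,F\nu)$, hence $\eta(V)=0$.

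\textbf{Items \ref{as6} and \ref{as7}.} For \ref{as6}, specialize \ref{as4} to $X=\xi$: using $\Chi\xi=0$, $\eta(\xi)=1$ and $(V,\xi)=\eta(V)=0$ from \ref{as5}, one gets $V=\Chi f\xi$; then applying $\Chi$ and using $\Chi^2=-\iid+\eta\otimes\xi$ together with \ref{as3} at $X=\xi$ (which gives $\eta(f\xi)=h$) produces $f\xi=h\xi-\Chi V$. Item \ref{as7} is then an immediate consequence: $\eta(fV)=(f\xi,V)=h(\xi,V)-(\Chi V,V)=0$ by \ref{as5} and antisymmetry of $\Chi$.

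\textbf{Item \ref{as8}.} Pick the basis $\{e_1,e_2=\Chi e_1,\xi\}$ with $e_1$ a unit vector in $\xi^\perp$, so $\eta(e_1)=\eta(e_2)=0$. Plugging $X=e_1$ into \ref{as4} gives $fe_2=\Chi fe_1-(V,e_1)\xi$. Taking the inner product with $e_2$, using antisymmetry of $\Chi$, symmetry of $f$ and $\Chi^2 e_1=-e_1$, one obtains $(fe_1,e_2)=0$ and $(fe_2,e_2)=(fe_1,e_1)$. Finally, since $F^2=\iid$ and $F$ is symmetric, $\trace F=0$ on $T\mathbb P$, while splitting this trace in the frame $\{e_1,e_2,\xi,\nu\}$ gives $(fe_1,e_1)+(fe_2,e_2)+(f\xi,\xi)+h=0$; combined with $(f\xi,\xi)=h$ from \ref{as6} this forces $(fe_1,e_1)=(fe_2,e_2)=-h$.

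\textbf{Items \ref{as9} and \ref{as10}.} These are now one-line consequences: $JV=\Chi V+\eta(V)\nu=\Chi V$ by \ref{as5}, and $F\xi=f\xi+(V,\xi)\nu=f\xi$ again by \ref{as5}.

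The main (mild) obstacle is getting the dependency order right: \ref{as3}--\ref{as4} must precede \ref{as6}, which in turn is needed for the trace argument in \ref{as8}; and \ref{as5} has to be established early since it is reused in \ref{as6}--\ref{as7}, \ref{as9}--\ref{as10}. Beyond this bookkeeping, every step is a short calculation.
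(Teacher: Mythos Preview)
Your proof is correct and follows essentially the same strategy as the paper: establish $JF=FJ$, then read off \ref{as3}--\ref{as4} as the tangential/normal components of $JFX=FJX$, and use the trace of $F$ for \ref{as8}. You take slightly different (and arguably cleaner) routes to \ref{as5}--\ref{as7}: the paper gets \ref{as5} and \ref{as6} simultaneously by expanding $J(F\nu)=F(J\nu)$, whereas you obtain \ref{as5} by an abstract symmetry argument and then derive \ref{as6} by specializing \ref{as4} to $X=\xi$; both are fine.

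One small gap: in \ref{as8} you write that ``$F^2=\iid$ and $F$ symmetric'' forces $\trace F=0$. That is not a valid implication on its own (e.g.\ $F=\iid$ satisfies both). The actual reason is that the $\pm1$--eigenspaces of $F$ are $T\mathbb M_1(c_1)$ and $T\mathbb M_2(c_2)$, each two-dimensional, so the trace vanishes. The paper simply asserts $\trace F=0$ without comment, so this is a minor point, but your stated justification should be replaced.
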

\begin{proof}
For any $X, Y \in \Gamma(TM)$, wer have $(\Chi X, \Chi Y) = (X, Y ) - \eta(X)\eta(Y ).$  Thus, 
$(\Chi e_1, e_2) = (\Chi^2 e_1, \Chi e_2) = -(e_1, \Chi e_2)$. It is evident that $\Chi \xi = 0$. This proves (\ref{as1}).  Now, for any $X_1+X_2 \in \Gamma(T\mathbb P)$, we have
\begin{eqnarray*}
J \circ F     (X_1 + X_2) &=& J(X_1 - X_2)= J_1 X_1 - J_2 X_2 \\
&=&F(J_1 X_1 + J_2 X_2)= F \circ J (X_1 + X_2).
\end{eqnarray*}
This proves (\ref{as2}). From $J \circ F = F \circ J$, and using that $f$ is symmetric and (\ref{chi}),  we have for any $X \in \Gamma (TM)$, 
$$\left \{
\begin{array}{l}
f \Chi X + \eta(X) V = \Chi fX - (V, X) \xi,\\ 
(V, \Chi X) + h \eta (X) = \eta (fX).
\end{array}
\right.
$$
This proves (\ref{as3}) and (\ref{as4}). We also have $J (F \nu) = \ F(J \nu)$. Thus, $$\Chi V + \eta (V) \nu - h \xi = -f\xi-(V, \xi) \nu.$$
This implies 
$$\left \{
\begin{array}{l}
f\xi = h \xi - \Chi V, \\
\eta (V) = -(V, \xi) = 0.  
\end{array}
\right.
$$
This proves (\ref{as5}) and (\ref{as6}). From $(V, \Chi X) + h \eta (X) = \eta (fX)$ and for $X = V$, we get 
  $$\eta(fV) = h \eta (V) + {\underbrace{(V, \Chi V) }_{=0}}= 0,$$
  which is (\ref{as7}). We calculate 
   \begin{eqnarray*}
   (fe_1, e_2) &=&  -(f\Chi e_2, e_2) = (-\Chi f e_2 + (V, e_2)\xi, e_2) \\
   &=& -(\Chi fe_2, e_2) = (fe_2, \Chi e_2) = -(fe_2, e_1).
   \end{eqnarray*} 
   Since $f$ symmetric, it implies that $(fe_1, e_2) = 0$. Moreover,  we have 
   \begin{eqnarray*}
   (fe_1, e_1) &=&  -(f\Chi e_2, e_1) = (-\Chi f e_2 + (V, e_2)\xi, e_1) \\
   &=& -(\Chi fe_2, e_1) = (fe_2, \Chi e_1) = (fe_2, e_2).
   \end{eqnarray*} 
   We know that $\mathrm{tr} (F) = 0$. Thus, 
   \begin{eqnarray*}
   0 &=& (Fe_1, e_1) + (Fe_2, e_2) + (F\xi, \xi) + (F\nu, \nu)\\
   &=& (fe_1, e_1) + (fe_2, e_2) + {\underbrace{(f\xi, \xi)}_{= h}}+{\underbrace{(V + h\nu, \nu)}_{h+0=h}}  
  \\ &=& (fe_1, e_1) + (fe_2, e_2). 
   \end{eqnarray*}
Thus, $(fe_1, e_1) = (fe_2, e_2) = -h$. This proves (\ref{as8}).  Since $(V, \xi) = 0$, it is clear that $F \xi = f\xi$ and from $J = \Chi + \eta (\cdot)\nu$, we have $JV =\Chi V$. This proves (\ref{as9}) and (\ref{as10}). 
%\end{enumerate}
%From the relation between the Riemannian connections of $\mathbb S^2 \times \mathbb R^2$ and $M$, $\overline{\nabla}_XY=\nabla_XY+g(IIX,Y)\nu$, we deduce the two following identities: {\color{red}{ces 2 formules sont vrais pour CP2, ils restent vrai pour $\mathbb S^2 \times \mathbb R^2$ ???????}}
%\beqt\label{condd3}
%(\nabla_X\Chi)Y=\eta(Y)IIX-g(IIX,Y)\xi,
%\eeqt
%\beqt\label{condd4}
%\nabla_X\xi=\Chi IIX.
%\eeqt
\end{proof}
For all $X, Y, Z \in \Gamma(TM)$, the Gauss equation for the hypersurface $M$ of $\mathbb P$ can be written as \beq\label{gausssr}
\mathcal R(X,Y)Z&=& \frac {c_1}{4}\Big(( X+fX) \wedge (Y +fY)\Big) +  \frac {c_2}{4}\Big(( X-fX) \wedge( Y -fY)\Big)\nonumber\\
&&+g(IIY,Z)IIX -g(IIX,Z)IIY, 
\eeq
where $\mathcal R$ denotes the Riemann curvature tensor. The Codazzi equation is
\beq\label{codazzisr}
d^{\nabla}II(X,Y) &=& \frac {c_1}{4} \Big(g(fY, Z)g(V, X)-g(fX, Z)g(V, Y) \nonumber \\ && + g(Y, Z)g(V, X) - g(X, Z)g(V, Y)\Big) \nonumber \\ && - \frac{c_2}{4} \Big( g(Y, Z)g(V, X)-g(Y, fZ)g(V, X) \nonumber \\ && - g(X, Z)g(Y, V) + g(X, fZ)g(V, Y)\Big) 
\eeq
Now, we ask if the Gauss equation \eqref{gausssr} and the Codazzi equation \eqref{codazzisr} are sufficient to get an isometric immersion of $(M,g)$ into $\mathbb P= \mathbb M_1(c_1) \times \mathbb M_2(c_2)$.
\begin{defn}[{\bf Compatibility equations}]\label{compp}
Let $(M^3,g)$ be a simply connected oriented Riemannian manifold endowed with an almost contact metric structure $(\Chi, \xi, \eta)$ and $E$ be a field of symmetric endomorphisms on $M$. We say that $(M, g, E)$ satisfies the compatibility equations for $\mathbb M_1(c_1) \times \mathbb M_2(c_2)$  if and only if for any $X,Y,Z\in\Gamma(TM)$, we have
\begin{eqnarray}
\label{gaussSR}%\nonumber
\mathcal R(X,Y)Z&=& \frac {c_1}{4}\Big(( X+fX) \wedge (Y +fY)\Big) +  \frac {c_2}{4}\Big(( X-fX) \wedge( Y -fY)\Big)\nonumber\\
&&+g(EY,Z)EX -g(EX,Z)EY,
\end{eqnarray}
\begin{eqnarray}
\label{codazziSR}
d^{\nabla}E(X,Y) &=& \frac {c_1}{4} \Big(g(fY, Z)g(V, X)-g(fX, Z)g(V, Y) \nonumber \\ && + g(Y, Z)g(V, X) - g(X, Z)g(V, Y)\Big) \nonumber \\ && - \frac{c_2}{4} \Big( g(Y, Z)g(V, X)-g(Y, fZ)g(V, X) \nonumber \\ && - g(X, Z)g(Y, V) + g(X, fZ)g(V, Y)\Big),
\end{eqnarray}
\begin{eqnarray}
\label{Structure4}
(\nabla_Xf)Y = g(Y, V) EX + g(EX, Y)V,
\end{eqnarray} 
\begin{eqnarray}
\label{Structure5}
\nabla_XV = -f (EX) + h EX,
\end{eqnarray}
\begin{eqnarray}\label{Structure6}
\nabla f = -2 EV.
\end{eqnarray}
\end{defn}
In \cite{Ko, LTV}, Kowalczyk and De Lira-Tojeiro-Vit\'orio proved independently that  that the Gauss equation  \eqref{gaussSR} and the Codazzi equation \eqref{codazziSR} together with (\ref{Structure1}), (\ref{Structure2}), (\ref{Structure3}),  (\ref{Structure4}), (\ref{Structure5}), (\ref{Structure6}) and if $\frac{F \pm \mathrm{Id}}{2}$ are of rank $2$, where $F$ is given by $F = 
 \left( \begin{array}{cc}
f &  V\\
V & h
\end{array} \right)
$, are necessary and sufficient for the existence of an isometric immersion from $M$ into $\mathbb M_1(c_1) \times \mathbb M_2(c_2)$ such that the complex structure of $\mathbb M_1(c_1) \times \mathbb M_2(c_2)$ over $M$ is given by $J= \Chi  + \eta(\cdot) \nu$ , $E$ as second fundamental form and such that the product structure coincides with $F$ over $M$. This immersion is global if $M$ is simply connected. Note that this was previously proven in a more abstract way by Piccione and Tausk \cite{PT}.

\section{Isometric immersions into $\mathbb{M}_1(c_1) \times \mathbb{M}_2(c_2)$  via spinors}

In this section, we consider two different $\spinc$ structures on $\mathbb P= \mathbb{M}_1(c_1) \times \mathbb{M}_2(c_2)$  carrying  parallel spinor fields. For the first structure,  the parallel spinor $\psi$ is  lying in $\Sigma^+ \mathbb P$ and for the second  $\spinc$ structure the  parallel spinor field $\Psi$ is lying in $\Sigma^- \mathbb P$.   The restriction of these two  $\spinc$ structures to any hypersurface $M^3$ defines two $\spinc$ structures on $M$, each one  with a generalized Killing spinor field. These spinor fields will characterize the isometric immersion of $M$ into $\mathbb P= \mathbb{M}_1(c_1) \times \mathbb{M}_2(c_2)$.  \\

We denote by $\pi_i(X)$  the projection of a vector $X$ on $T \mathbb M_i(c_i)$. We have
%$$(X_1 + X_2) (\psi_1 \otimes \psi_2) = X_1 \cdot \psi_1  \otimes \psi_2 + \bar \psi_1 \otimes X_2 \cdot \psi_2$$ $$
%and 
\begin{eqnarray}\label{Sys1}
\left \{
\begin{array}{l} 
\pi_1 (V) = \frac{(1-h)V + \Vert V\Vert^2 \nu}{2} \\\\
\pi_2 (V) = \frac{(h+1)V - \Vert V\Vert^2 \nu}{2} \\\\
\pi_1 (\xi) = - \pi_1(J\nu) = -J (\pi_1(\nu)) \\\\
\pi_2 (\xi) = - \pi_2(J\nu) = -J(\pi_2(\nu))\\\\
\pi_1 (\nu) = \frac{(h+1)\nu + V }{2}\\\\
\pi_2(\nu) = \frac{(1-h)\nu -V }{2}
\end{array}
\right.
\end{eqnarray}

\subsection{A first $\spinc$ structure on $\mathbb{M}_1(c_1) \times \mathbb{M}_2(c_2)$  and its restriction to hypersurfaces}
Assume that there exists an isometric immersion of $(M^3,g)$ into $\mathbb{M}_1(c_1) \times \mathbb{M}_2(c_2)$  with shape operator $II$. By Section \ref{cm}, we know that $M$ has an almost contact metric structure $(\Chi,\xi,\eta)$ such that $\Chi X = JX -\eta(X) \nu$ for every $X\in \Gamma(TM)$ and the product structure $F$ on $\mathbb{M}_1(c_1) \times \mathbb{M}_2(c_2)$  will be restricted via $f, V$ and $h$. Consider the product of the canonical $\spinc$ structure on $\mathbb{M}_1(c_1) $  with the canonical one on $ \mathbb{M}_2(c_2)$ . It has a parallel spinor $\psi = \psi_1^+ \otimes \psi_2^+$  lying in $\Sigma_0 (\mathbb{M}_1(c_1) \times \mathbb{M}_2(c_2) )  = \Sigma_0(\mathbb{M}_1(c_1)  ) \otimes \Sigma_0 (\mathbb{M}_2(c_2) ) \subset \Sigma^+ (\mathbb{M}_1(c_1) \times \mathbb{M}_2(c_2))$. 
First of all, using (\ref{splitt}), we have for any $X \in \Gamma(TM)$,
$$J(\pi_2(X))\cdot \pi_2(X)\cdot \psi_2^+ = i \vert \pi_2(X)\vert^2 \psi_2^+ \ \ \text{and}\ \ \  
  J(\pi_1(X))\cdot \pi_1(X)\cdot \psi_1^+ = i \vert \pi_1(X)\vert^2 \psi_1^+.$$ 
  \begin{lem}\label{lemaCM} We have
 $$ - \pi_1(\nu)\cdot \psi_1^+ \otimes \pi_2(\xi) \cdot \psi_2^+  + \pi_1 (\xi)\cdot  \psi_1^+ \otimes \pi_2(\nu)\cdot  \psi_2^+ = 0$$
  \end{lem}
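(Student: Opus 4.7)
The plan is to reduce everything to a single Clifford-algebra identity saying that, on the canonical parallel spinor of a $2$-dimensional K\"ahler $\spinc$ manifold, Clifford multiplication by $JX$ agrees (up to the factor $i$) with Clifford multiplication by $X$ itself. Once this is in hand, the two summands in the lemma are equal and of opposite sign, so they cancel.

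Concretely, I would start from the two identities already displayed just before the lemma statement:
\[
J(\pi_1(X))\cdot \pi_1(X)\cdot \psi_1^+ = i|\pi_1(X)|^2\,\psi_1^+ ,\qquad
J(\pi_2(X))\cdot \pi_2(X)\cdot \psi_2^+ = i|\pi_2(X)|^2\,\psi_2^+ .
\]
Clifford-multiplying the first equation on the left by $\pi_1(X)$ and using the standard Clifford relations (namely $Y\cdot Y=-|Y|^2$ and $Y\cdot JY=-JY\cdot Y$ since $\langle Y,JY\rangle=0$), one gets
\[
|\pi_1(X)|^2\, J(\pi_1(X))\cdot \psi_1^+ \;=\; i\,|\pi_1(X)|^2\, \pi_1(X)\cdot \psi_1^+ ,
\]
and therefore, whenever $\pi_1(X)\neq 0$,
\[
J(\pi_1(X))\cdot \psi_1^+ \;=\; i\,\pi_1(X)\cdot \psi_1^+ ,
\]
and analogously for the index $2$. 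Note that when $\pi_1(X)=0$ the identity is vacuous and the corresponding summand in the lemma simply vanishes, so no separate case analysis is really needed.

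Next I would take $X=\nu$, and use the relations $\pi_1(\xi)=-J(\pi_1(\nu))$ and $\pi_2(\xi)=-J(\pi_2(\nu))$ from system~\eqref{Sys1} to deduce
\[
\pi_1(\xi)\cdot \psi_1^+ \;=\; -i\,\pi_1(\nu)\cdot \psi_1^+ ,\qquad
\pi_2(\xi)\cdot \psi_2^+ \;=\; -i\,\pi_2(\nu)\cdot \psi_2^+ .
\]
Substituting these two equalities into the expression of the lemma:
\[
-\pi_1(\nu)\cdot \psi_1^+\otimes \pi_2(\xi)\cdot \psi_2^+ \;+\; \pi_1(\xi)\cdot \psi_1^+\otimes \pi_2(\nu)\cdot \psi_2^+ ,
\]
one obtains $+\,i\,\pi_1(\nu)\cdot \psi_1^+\otimes \pi_2(\nu)\cdot \psi_2^+$ from the first term and $-\,i\,\pi_1(\nu)\cdot \psi_1^+\otimes \pi_2(\nu)\cdot \psi_2^+$ from the second term, giving $0$.

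There is no real obstacle here; the only subtlety is bookkeeping for the sign produced by $Y\cdot JY=-JY\cdot Y$ and by $Y\cdot Y=-|Y|^2$ when one moves Clifford multiplications past each other in order to isolate $J(\pi_j(\nu))\cdot\psi_j^+$. The conceptual content is simply that the parallel spinors $\psi_j^+$ lie in $\Sigma_0\mathbb M_j(c_j)$, the $+i$-eigenspace of the K\"ahler form, so Clifford multiplication by $J$ becomes multiplication by $i$ on this line, which is exactly what makes the two cross terms cancel.
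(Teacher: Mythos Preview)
Your proof is correct and follows essentially the same route as the paper: both reduce to the identities $\pi_j(\xi)\cdot\psi_j^+=-J(\pi_j(\nu))\cdot\psi_j^+=-i\,\pi_j(\nu)\cdot\psi_j^+$ (for $j=1,2$) and then observe that the two summands become $\pm i\,\pi_1(\nu)\cdot\psi_1^+\otimes\pi_2(\nu)\cdot\psi_2^+$ and cancel. The only difference is that you spell out the derivation of $J(\pi_j(\nu))\cdot\psi_j^+=i\,\pi_j(\nu)\cdot\psi_j^+$ from the displayed quadratic identity via the Clifford relations, whereas the paper simply invokes it directly.
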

\begin{proof}
 Using that $ i\pi_2(\nu) \cdot \psi_2^+ = J(\pi_2(\nu)) \cdot \psi_2^+$ and $ i\pi_1(\nu)\cdot  \psi_2^+ = J(\pi_1(\nu))\cdot  \psi_2^+$, we have 
 \begin{eqnarray*}
 && - \pi_1(\nu) \cdot \psi_1^+ \otimes \pi_2(\xi)\cdot  \psi_2^+  + \pi_1 (\xi)\cdot  \psi_1^+ \otimes \pi_2(\nu)\cdot  \psi_2^+ \\ &=& i 
 \pi_1(\nu) \cdot \psi_1^+ \otimes \pi_2(\nu)\cdot \psi_2^+ - i \pi_1(\nu) \cdot \psi_1^+ \otimes \pi_2(\nu)\cdot \psi_2^+\\ &=& 0.
 \end{eqnarray*}
 \end{proof}
\begin{lem}
 The restriction $\varphi_1$  of the parallel spinor $\psi$  on $\mathbb{M}_1(c_1) \times \mathbb{M}_2(c_2)$   is a solution of the generalized Killing equation
\begin{eqnarray}
 \nabla^1_X\varphi_1 + \frac 12 \gamma_1 (II X)\varphi_1=0,
\end{eqnarray}
where $\nabla^1$ (resp. $\gamma_1$) denotes the $\Spinc$ Levi-Civita connection (resp. the Clifford multiplication) on the induced $\Spinc$ bundle. Moreover, $\varphi_1$ satisfies $\gamma_1(\xi)\varphi_1= -i\varphi_1$. The curvature 2-form $i\Omega^1$ of the auxiliary line bundle associated with the induced $\spinc$ structure is given in the basis  $\{e_1, e_2 = \Chi e_1, \xi\}$
by 
$$\Omega^1(e_1, e_2) = \frac {c_1}{2} (h-1) - \frac{c_2}{2} (h+1),$$
$$\Omega^1 (e_1, \xi) = \frac {c_1-c_2}{2} (e_1, V),$$
$$\Omega^1 (e_2, \xi) = \frac {c_1-c_2}{2} (e_2, V).$$
%In this case, we have $$
\end{lem}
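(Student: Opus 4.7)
The statement splits into three pieces, which I would handle in turn.

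The first piece (the generalized Killing equation) is immediate from the Spin$^c$ Gauss formula \eqref{spingauss}: since $\psi$ is parallel on $\mathbb P$ and lies in $\Gamma(\Sigma^+\mathbb P)$, restricting to $M$ gives $0 = (\nabla^{\Sigma\mathbb P}_X \psi)|_M = \nabla^1_X \varphi_1 + \tfrac{1}{2}\gamma_1(IIX)\varphi_1$.

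For the algebraic relation $\gamma_1(\xi)\varphi_1 = -i\varphi_1$, I would compute $(\xi \cdot \nu \cdot \psi)|_M$ directly. Decomposing $\xi = \pi_1(\xi) + \pi_2(\xi)$ and $\nu = \pi_1(\nu) + \pi_2(\nu)$ and applying twice the product Clifford rule $(X_1 + X_2) \cdot (\psi_1 \otimes \psi_2) = X_1 \cdot \psi_1 \otimes \psi_2 + \overline{\psi}_1 \otimes X_2 \cdot \psi_2$, using $\overline{\psi_i^+} = \psi_i^+$ while $\overline{X \cdot \psi_i^+} = -X \cdot \psi_i^+$ (the latter because $X \cdot \psi_i^+ \in \Sigma^- \mathbb M_i(c_i)$), the expansion produces four terms. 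The two mixed terms cancel exactly by Lemma \ref{lemaCM}. For the two diagonal terms, I would use $\pi_i(\xi) = -J_i \pi_i(\nu)$ from \eqref{Sys1}, together with $J_i X \cdot X + X \cdot J_i X = 0$ (as $J_i X \perp X$) and the action of $J_i$ on the canonical parallel spinor $\psi_i^+ \in \Sigma_0 \mathbb M_i(c_i)$, to reduce each to $-i |\pi_i(\nu)|^2 \psi_i^+$. Summing and using $|\pi_1(\nu)|^2 + |\pi_2(\nu)|^2 = |\nu|^2 = 1$ then yields $(\xi \cdot \nu \cdot \psi)|_M = -i \varphi_1$.

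For the curvature of the auxiliary bundle, I would use that the canonical Spin$^c$ structure on a K\"ahler manifold has auxiliary bundle equal to the anti-canonical bundle, with natural connection whose curvature form is $-i\rho$, where $\rho(X, Y) = \Ric(JX, Y)$. Since $\mathbb P$ is the K\"ahler product of two manifolds, $\rho = \rho_1 + \rho_2$; and since each factor $\mathbb M_i(c_i)$ is $2$-dimensional with $\Ric_i = c_i \overline{g}_i$, one has $\rho_i(X, Y) = c_i \ltimes_i(X, Y) = c_i \overline{g}(J_i \pi_i X, \pi_i Y)$. Restricting to $M$ therefore gives $\Omega^1(X, Y) = -c_1 \ltimes_1(X, Y) - c_2 \ltimes_2(X, Y)$ for $X, Y \in TM$. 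The three stated evaluations follow by substituting $\pi_1 X = \tfrac{1}{2}(X + fX + (V, X)\nu)$ and $\pi_2 X = \tfrac{1}{2}(X - fX - (V, X)\nu)$, using $J \pi_i = \pi_i J$ and $J|_M = \Chi + \eta(\cdot)\nu$, and simplifying via Lemma \ref{Fo} — in particular $(fe_i, e_i) = -h$, $(fe_1, e_2) = 0$, $(fe_2, \xi) = -(V, e_1)$, $\Chi V = (V, e_1)e_2 - (V, e_2)e_1$, and $|fe_i|^2 = 1 - (V, e_i)^2$ (the last coming from $f^2 X = X - (V, X) V$).

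The computational third step is the main hurdle: each of the three K\"ahler-form evaluations expands into many terms, and substantial cancellations forced by the almost-contact identities of Lemma \ref{Fo} must be carefully tracked to recover the stated closed-form expressions.
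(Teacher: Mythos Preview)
Your proposal is correct and follows essentially the same approach as the paper: the Gauss formula for the Killing equation, the expansion of $\xi\cdot\nu\cdot\psi$ via the product Clifford rule together with Lemma~\ref{lemaCM} and the identities $J(\pi_i X)\cdot\pi_i X\cdot\psi_i^+=i|\pi_i X|^2\psi_i^+$ for the relation $\gamma_1(\xi)\varphi_1=-i\varphi_1$, and the formula $\Omega^1(X,Y)=-\Ric^{\mathbb M_1(c_1)}(J\pi_1X,\pi_1Y)-\Ric^{\mathbb M_2(c_2)}(J\pi_2X,\pi_2Y)$ simplified via Lemma~\ref{Fo} for the curvature. Your write-up is in fact somewhat more explicit about the sign bookkeeping in the Clifford expansion than the paper's own proof.
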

\begin{proof}
%First, since $\psi$ is parallel, we have $ D^{\SR} \psi = (\nabla^{\SR})^* \nabla^{\SR }\psi =0$. Hence, by the Schr\"{o}dinger-Lichnerowicz formula, we get 
%\begin{eqnarray}
%\Omega^{\SR} \cdot\psi =  i \psi.
%\end{eqnarray}
By the Gauss formula (\ref{spingauss}), the restriction $\varphi_1$ of the parallel spinor $\psi$ on $\mathbb P$ satisfies
$$\nabla^1_X\varphi_1 =- \frac 12 \gamma_1(II )\varphi_1.$$
%Since the spinor $\psi$ is parallel, Equality (\ref{RRicci}) gives
%$$\mathrm{Ric}^{\mathbb S^2 \times \mathbb R^2}(X)\cdot\psi = i (X\lrcorner \Omega^{\SR})\cdot\psi $$
%Where $\Ric$ is the Ricci tensor of $\CM$. Therfore, we compute,
%\begin{eqnarray*}
%\gamma_1(\nu\lrcorner \Omega^{\SR}) \varphi_1 &=&(\nu\lrcorner \Omega^{\SR})\cdot\nu\cdot\psi|_M \\ &=&
%-\nu\cdot(\nu\lrcorner \Omega^{\SR})\cdot\psi|_M\\&=& i  \nu \cdot\mathrm{Ric}^{\SR}\ \nu\cdot\psi_{\vert_M}
%\end{eqnarray*}
%But 
%\begin{eqnarray*}
%\mathrm{Ric}^{\SR} \nu &=& \mathrm{Ric}^{\mathbb S^2} (\pi_1 \nu) = \mathrm{Ric}^{\mathbb S^2} (\frac{\nu+ F \nu}{2}) \\ &=& \frac{\nu + F \nu}{2} = \frac{V + (h+1)\nu}{2}
%\end{eqnarray*}
%Thus, $\gamma_1(\nu\lrcorner \Omega^{\SR})\varphi_1  = - \frac i2 \gamma_1(V)\varphi_1-\frac i2 (h+1) \varphi_1$
%By Equation (\ref{glucose}), we get that 
%\begin{eqnarray}
%\gamma_1(\Omega^1) \varphi_1 = -\frac i2 \gamma_1(V )\varphi_1+ \frac i2 (1-h) \varphi_1
%\label{xi}
%\end{eqnarray}
Now, for any $X, Y \in \Gamma(TM)$, we have 
\begin{eqnarray*}
\Omega^1 (X, Y) &=& \Omega^{\mathbb{M}_1 (c_1) \times \mathbb{M}_2(c_2)} (X, Y) \\&=&
-\mathrm{Ric}^{\mathbb{M}_1 (c_1) } (J\pi_1X, \pi_1Y) - \mathrm{Ric}^{\mathbb{M}_2 (c_2) } (J\pi_2X, \pi_2Y) \\&=& 
-\frac{c_1}{4} g\Big(\Chi X +\eta(X) \nu + \Chi fX +\eta(fX)\nu -(V, X)\xi, Y+fY+(V, Y)\nu \Big) \\&& 
-\frac{c_2}{4} g\Big(\Chi X +\eta(X) \nu - \Chi fX -\eta(fX)\nu +(V, X)\xi, Y - fY-(V, Y)\nu \Big).
\end{eqnarray*} Using Lemma \ref{Fo}, we have 
$$\Omega^1(e_1, e_2) = \frac{c_1}{2} (h-1) - \frac{c_2}{2} (h+1),$$
$$ \Omega^1(e_1, \xi) = \frac{c_1- c_2}{2} (V, e_1),$$ 
$$ \Omega^1(e_2, \xi) = \frac{c_1- c_2}{2} (V, e_2).$$ 
Now, we have 
\begin{eqnarray*}
\gamma_1(\xi) (\varphi_1) &=& {\xi \cdot  \nu  \cdot  (\psi_1^+ \otimes \psi_2^+)}_{\vert_M} \\ &=&
{[\pi_1 (\xi)\cdot  \pi_1(\nu) \cdot \psi_1^+ \otimes \psi_2^+  - \pi_1(\nu) \cdot \psi_1^+ \otimes \pi_2(\xi)\cdot  \psi_2^+]}_{\vert_M} \\ && + {[\pi_1 (\xi) \cdot \psi_1^+ \otimes \pi_2(\nu)\cdot  \psi_2^+ + \psi_1^+ \otimes \pi_2(\xi) \cdot \pi_2(\nu)\cdot  \psi_2^+ ]}_{\vert_M}
\end{eqnarray*}
  Thus, 
  \begin{eqnarray*}
\gamma_1(\xi) (\varphi_1) &=& {\xi \cdot  \nu  \cdot  (\psi_1^+ \otimes \psi_2^+)}_{\vert_M} \\ &=&
[-i \vert \pi_1(\nu) \vert ^2 - i \vert \pi_2 (\nu) \vert^2 ]\varphi_1  - {[\pi_1(\nu) \cdot \psi_1^+ \otimes \pi_2(\xi)\cdot  \psi_2^+  + \pi_1 (\xi)\cdot  \psi_1^+ \otimes \pi_2(\nu) \cdot \psi_2^+]}_{\vert_M}  \\
&=& - i \varphi_1  + {\underbrace{{[- \pi_1(\nu) \cdot \psi_1^+ \otimes \pi_2(\xi) \cdot \psi_2^+  + \pi_1 (\xi)\cdot  \psi_1^+ \otimes \pi_2(\nu)\cdot  \psi_2^+]}_{\vert_M} }_{=0 \ \ \text{by Lemma} \ \ \ref{lemaCM}}} \\&=& -i \varphi_1.
\end{eqnarray*}
\end{proof}
\subsection{A second $\spinc$ structure on $\mathbb{M}_1 (c_1) \times \mathbb{M}_2(c_2)$ and its restriction to hypersurfaces}
One can also endow $\mathbb{M}_1 (c_1) \times \mathbb{M}_2(c_2)$ with another $\spinc$ structure. Mainly, the one coming from the product of the anticanonical $\spinc$ on $\mathbb{M}_1 (c_1)$ with the canonical $\spinc$ structure on $ \mathbb{M}_2(c_2)$ which carries also a parallel spinor $\Psi = \psi_1^- \otimes \psi_2^+ $.  The parallel spinor $\Psi$ lies in $ \Sigma_1 (\mathbb M_1(c_1)) \otimes \Sigma_0( \mathbb M_2(c_2)) \subset \Sigma^-(\mathbb{M}_1 (c_1) \times \mathbb{M}_2(c_2))$.  
Using (\ref{splitt}), we have for any $X \in \Gamma(TM)$ 
  $$J(\pi_2(X))\cdot \pi_2(X)\cdot \psi_2^+ = i \vert \pi_2(X)\vert^2 \psi_2^+ \ \ \text{and}\ \ \  J(\pi_1(X))\cdot \pi_1(X)\cdot \psi_1^- = - \vert \pi_1(X)\vert^2i \psi_1^-.$$
 %  \begin{lem}
   %$$(\pi_1(V) \cdot \pi_1(\nu) \cdot \psi_1^- )\otimes \psi_2^+ = - \psi_1^- \otimes (\pi_2(V)\cdot  \pi_2(\nu)\cdot  \psi_2^+ ) = - \frac 12 \Vert V\Vert^2 (\psi_1^- \otimes \psi_2^+) $$
   %\end{lem}
   %{\bf Proof.} By a straightforward calculation and using (\ref{Sys1}), we get the desired result.    
\begin{lem} \label{lemacl} We have
$$\pi_1(\nu) \cdot \psi_1^-  \otimes (\pi_2(V) + i \pi_2(\xi))\cdot \psi_2^+  - (\pi_1(V) +i \pi_1(\xi)) \cdot \psi_1^- \otimes \pi_2 (\nu)\cdot \psi_2^+ = 0$$
\end{lem}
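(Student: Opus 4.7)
The plan is to imitate the proof of Lemma \ref{lemaCM}, reducing everything to a relation between $\pi_1(\nu)\cdot\psi_1^-$ and $\pi_2(\nu)\cdot\psi_2^+$ by using the two key eigenvalue identities for the Kähler form acting on $\psi_1^-$ and $\psi_2^+$.

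First, I would record the two elementary consequences of the given identities $J(\pi_2 X)\cdot \pi_2 X\cdot\psi_2^+ = i|\pi_2 X|^2\psi_2^+$ and $J(\pi_1 X)\cdot\pi_1 X\cdot\psi_1^- = -i|\pi_1 X|^2\psi_1^-$: namely, for any $Y$ tangent to $\mathbb{M}_2(c_2)$ one has $JY\cdot\psi_2^+ = iY\cdot\psi_2^+$, and for any $Y$ tangent to $\mathbb{M}_1(c_1)$ one has $JY\cdot\psi_1^- = -iY\cdot\psi_1^-$ (both follow by Clifford anticommutation with $Y$). Combined with $\xi=-J\nu$ and the fact that $J$ preserves the factors so that $\pi_j(\xi)=-J(\pi_j(\nu))$, this gives
\begin{equation*}
\pi_2(\xi)\cdot\psi_2^+ = -i\,\pi_2(\nu)\cdot\psi_2^+, \qquad \pi_1(\xi)\cdot\psi_1^- = +i\,\pi_1(\nu)\cdot\psi_1^-.
\end{equation*}
Therefore the two clumps appearing in the statement simplify to
\begin{equation*}
(\pi_2(V)+i\pi_2(\xi))\cdot\psi_2^+ = (\pi_2(V)+\pi_2(\nu))\cdot\psi_2^+,
\end{equation*}
\begin{equation*}
(\pi_1(V)+i\pi_1(\xi))\cdot\psi_1^- = (\pi_1(V)-\pi_1(\nu))\cdot\psi_1^-.
\end{equation*}

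Next I would exploit the explicit formulas \eqref{Sys1} for $\pi_j(V)$ and $\pi_j(\nu)$ together with Lemma \ref{lem1}, $\|V\|^2=1-h^2=(1-h)(1+h)$. A short factorization gives the striking proportionalities $\pi_2(V)=-(1+h)\pi_2(\nu)$ and $\pi_1(V)=(1-h)\pi_1(\nu)$. Consequently,
\begin{equation*}
\pi_2(V)+\pi_2(\nu) = -h\,\pi_2(\nu), \qquad \pi_1(V)-\pi_1(\nu) = -h\,\pi_1(\nu).
\end{equation*}

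Plugging these into the left-hand side of the lemma, both tensor terms become $-h\,\pi_1(\nu)\cdot\psi_1^-\otimes\pi_2(\nu)\cdot\psi_2^+$ (with opposite signs), so the difference vanishes identically.

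There is no real obstacle here: the identity is purely algebraic and the only thing to watch out for is the sign convention for the action of $J$ on each parallel spinor, which is opposite for the canonical and anti-canonical structures. The role of the corrective terms $i\pi_j(\xi)$ in the statement is precisely to convert the $\xi$-contribution into a $\pm\pi_j(\nu)$-contribution so that the $V$-parts collapse (via $\|V\|^2=1-h^2$) onto a common scalar multiple of $\pi_j(\nu)$, after which the two tensor products are manifestly equal.
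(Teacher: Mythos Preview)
Your proof is correct. The opening step---using the $J$-eigenvalue identities $J(\pi_2 Y)\cdot\psi_2^+=iY\cdot\psi_2^+$ and $J(\pi_1 Y)\cdot\psi_1^-=-iY\cdot\psi_1^-$ together with $\pi_j(\xi)=-J(\pi_j(\nu))$ to convert the $i\pi_j(\xi)$-terms into $\pm\pi_j(\nu)$-terms---is exactly what the paper does. From that point, however, the paper proceeds by brute force: it expands each of the three remaining tensors (labelled $A$, $B$, $C$) in the four basic pieces $\nu\cdot\psi_1^-\otimes\nu\cdot\psi_2^+$, $\nu\cdot\psi_1^-\otimes V\cdot\psi_2^+$, $V\cdot\psi_1^-\otimes\nu\cdot\psi_2^+$, $V\cdot\psi_1^-\otimes V\cdot\psi_2^+$ and checks that all coefficients cancel. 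Your route is cleaner: the observation $\pi_2(V)=-(1+h)\pi_2(\nu)$ and $\pi_1(V)=(1-h)\pi_1(\nu)$ (immediate from \eqref{Sys1} and $\|V\|^2=1-h^2$) collapses each tensor factor to a multiple of $\pi_j(\nu)\cdot\psi_j^\pm$, so the two terms become $\mp h\,\pi_1(\nu)\cdot\psi_1^-\otimes\pi_2(\nu)\cdot\psi_2^+$ and cancel on sight. This buys transparency and avoids the coefficient-chasing (and the apparent $A+B+C$ vs.\ $A+B-C$ typo) in the paper's computation.
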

\begin{proof} Using that $i\pi_2(\nu) \cdot \psi_2^+ = J(\pi_2(\nu))\cdot \psi_2^+$ and  $i\pi_1(\nu)\cdot  \psi_1^- = -J(\pi_1(\nu))\cdot \psi_1^-$, we have 
\begin{eqnarray*}
&& \pi_1(\nu) \cdot\psi_1^-  \otimes (\pi_2(V) + i \pi_2(\xi)) \cdot\psi_2^+  - (\pi_1(V) +i \pi_1(\xi)) \cdot\psi_1^- \otimes \pi_2 (\nu) \cdot\psi_2^+   \\&=&
{\underbrace{ 2 \pi_1(\nu)\cdot\psi_1^- \otimes \pi_2(\nu)\cdot\psi_2^+}_{A}} +  {\underbrace{   \pi_1(\nu)\cdot\psi_1^- \otimes \pi_2(V)\cdot\psi_2^+}_{B}} -{\underbrace{ \pi_1(V)\cdot\psi_1^- \otimes \pi_2(\nu)\cdot\psi_2^+}_{C}}
\end{eqnarray*}
Let's calculate each term of the last identity. First we have 
\begin{eqnarray*}
A &= & 2 \pi_1(\nu)\cdot\psi_1^- \otimes \pi_2(\nu)\cdot\psi_2^+  \\ &=& 
\frac{1-h^2}{2} (\nu \cdot\psi_1^- \otimes \nu \cdot\psi_2^+) - \frac{h+1}{2} (\nu \cdot\psi_1^- \otimes V\cdot \psi_2^+) \\&&
+ \frac{1-h}{2} (V \cdot\psi_1^- \otimes \nu\cdot \psi_2^+) - \frac 12  (V\cdot \psi_1^- \otimes V\cdot \psi_2^+).
\end{eqnarray*}
Next, we have 
\begin{eqnarray*}
B &= &  \pi_1(\nu)\cdot\psi_1^- \otimes \pi_2(V)\cdot\psi_2^+  \\ &=& 
\frac{(h+1)^2}{4} (\nu \cdot\psi_1^- \otimes V\cdot \psi_2^+) - \frac{h+1}{4}\Vert V\Vert^2  (\nu\cdot \psi_1^- \otimes \nu \cdot\psi_2^+) \\&&
+ \frac{h+1}{4} (V\cdot \psi_1^- \otimes V\cdot \psi_2^+) - \frac 14 \Vert V \Vert^2  (V \cdot\psi_1^- \otimes \nu \cdot\psi_2^+),
\end{eqnarray*}
and 
\begin{eqnarray*}
C &= &  \pi_1(V)\cdot\psi_1^- \otimes \pi_2(\nu)\cdot\psi_2^+  \\ &=& 
\frac{(1-h)^2}{4} (V\cdot \psi_1^- \otimes \nu\cdot \psi_2^+) - \frac{1-h}{4} (V \cdot\psi_1^- \otimes V\cdot \psi_2^+) \\&&
+ \frac{1-h}{4} \Vert V \Vert^2 (\nu\cdot \psi_1^- \otimes \nu\cdot \psi_2^+) - \frac 14 \Vert V \Vert^2  (\nu \cdot\psi_1^- \otimes V \cdot\psi_2^+).
\end{eqnarray*}
It's clear that $A+B+C = 0$.
\end{proof}
\begin{lem}
 The restriction $\varphi_2$  of the parallel spinor $\Psi$ (for the  $\spinc$ structure described above) on $\mathbb M_1(c_1) \times \mathbb M_2(c_2)$  is a solution of the generalized Killing equation
\begin{eqnarray}
 \nabla^2_X\varphi_2 =\frac 12 \gamma_2(II X)\varphi_2,
\end{eqnarray}
where $\nabla^2$ (resp. $\gamma_2$) denotes the $\Spinc$ connection (resp. the Clifford multiplication) on the induced $\Spinc$ bundle. Moreover, $\varphi_2$ satisfies $\gamma_2(V)  \varphi_2 = - i \gamma_2(\xi)\varphi_2  + h \varphi_2$. The curvature 2-form of the auxiliary line bundle associated with the induced $\spinc$ structure is given in the basis  $\{e_1, e_2 = \Chi e_1, \xi\}$
by 
$$\Omega^2(e_1, e_2) = -\frac {c_1}{2} (h-1) - \frac{c_2}{2} (h+1),$$
$$\Omega^2 (e_1, \xi) = -\frac {c_1+c_2}{2} (e_1, V),$$
$$\Omega^2 (e_2, \xi) = -\frac {c_1+c_2}{2} (e_2, V).$$
Moreover, we have 
 \begin{eqnarray} \label{Ide1}
  0 =  (\gamma_2 (V) \varphi_2, \varphi_2),
  \end{eqnarray}
  \begin{eqnarray}\label{Ide2}
  g(V, e_1) = -i (\gamma_2 (e_2)\varphi_2, \varphi_2),
  \end{eqnarray}
  \begin{eqnarray}\label{Ide3}
  g(V, e_2) = i (\gamma_2 (e_1)  \varphi_2, \varphi_2),
  \end{eqnarray}
  \begin{eqnarray}\label{Ide4}
  h = i (\gamma_2 (\xi) \varphi_2, \varphi_2).
  \end{eqnarray}

\end{lem}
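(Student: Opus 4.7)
The generalized Killing equation with the $+\tfrac{1}{2}$ sign is immediate from the $\Spinc$ Gauss formula \eqref{spingauss-} applied to the parallel spinor $\Psi\in\Gamma(\Sigma^-\mathbb P)$: since $\nabla^{\mathbb P}\Psi=0$, one has $\nabla^2_X\varphi_2 = \tfrac{1}{2}\gamma_2(IIX)\varphi_2$. For the curvature, the second $\spinc$ structure has auxiliary bundle $K_{\mathbb M_1(c_1)}\otimes K_{\mathbb M_2(c_2)}^{-1}$, so its curvature form on $\mathbb P$ is $\Omega^{\mathbb P}=\rho^{\mathbb M_1}-\rho^{\mathbb M_2}$, differing from the first structure only by the sign of the $\mathbb M_1$-contribution. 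Using $\rho^{\mathbb M_i}(X,Y)=c_i\,g(J\pi_iX,\pi_iY)$, expanding the projections via \eqref{Sys1}, and evaluating on the basis $\{e_1,e_2=\Chi e_1,\xi\}$ with Lemma~\ref{Fo}, the three formulas for $\Omega^2$ follow by exactly the same manipulations as in the previous lemma but with the $c_1$-coefficient flipped in sign throughout.

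For the algebraic condition, I would use \eqref{Clifford-}, namely $\gamma_2(X)\varphi_2=-(X\cdot\nu\cdot\Psi)_{|_M}$, and expand $V\cdot\nu\cdot\Psi$ and $\xi\cdot\nu\cdot\Psi$ via the product Clifford rule, being careful that $\overline{\psi_1^-}=-\psi_1^-$. Two observations drive the simplification. First, combining \eqref{Sys1} with $\|V\|^2=1-h^2$ (Lemma~\ref{lem1}) yields the a priori non-obvious proportionalities $\pi_1(V)=(1-h)\pi_1(\nu)$ and $\pi_2(V)=-(1+h)\pi_2(\nu)$, forcing the diagonal contributions $\pi_i(V)\cdot\pi_i(\nu)\cdot\psi_i$ in $V\cdot\nu\cdot\Psi$ to cancel against each other. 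Second, the Clifford actions $J_1\pi_1(\nu)\cdot\psi_1^-=-i\pi_1(\nu)\cdot\psi_1^-$ (anti-canonical factor) and $J_2\pi_2(\nu)\cdot\psi_2^+=i\pi_2(\nu)\cdot\psi_2^+$ (canonical factor) reduce the $\pi_i(\xi)$-pieces to multiples of $\pi_i(\nu)\cdot\psi_i$; this is the content of Lemma~\ref{lemacl}. Collecting the surviving pieces one finds $\gamma_2(V)\varphi_2 = 2W_{|_M}$ and $\gamma_2(\xi)\varphi_2 = -ih\varphi_2+2iW_{|_M}$, with $W:=\pi_1(\nu)\cdot\psi_1^-\otimes\pi_2(\nu)\cdot\psi_2^+$; eliminating $W$ between these two expressions yields the desired algebraic relation $\gamma_2(V)\varphi_2=-i\gamma_2(\xi)\varphi_2+h\varphi_2$.

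The identities \eqref{Ide1}--\eqref{Ide4} follow from this algebraic condition by Hermitian inner products. Since $\gamma_2(X)$ is skew-Hermitian for real $X$, each $(\gamma_2(X)\varphi_2,\varphi_2)$ is purely imaginary, and $|\varphi_2|\equiv 1$ because $\Psi$ is parallel. Pairing the algebraic condition with $\varphi_2$ and separating real and imaginary parts produces \eqref{Ide1} and \eqref{Ide4} immediately. For \eqref{Ide2} and \eqref{Ide3}, I would apply $\gamma_2(e_j)$ from the left to the algebraic condition, pair with $\varphi_2$, and simplify using the three-dimensional Clifford identities $\gamma_2(e_1)\gamma_2(\xi)=-\gamma_2(e_2)$ and $\gamma_2(e_2)\gamma_2(\xi)=\gamma_2(e_1)$, which come from $\omega_\C\cdot=\mathrm{Id}$ on $\Sigma M$. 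The main obstacle is the product Clifford computation of the algebraic condition: one must combine the overline convention $\overline{\psi_1^-}=-\psi_1^-$, the opposite $J$-signs for canonical and anti-canonical parallel spinors, and the proportionality $\pi_i(V)\propto\pi_i(\nu)$ (invisible without invoking $\|V\|^2=1-h^2$) in just the right way for the diagonal terms to disappear.
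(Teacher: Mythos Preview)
Your proof is correct and follows essentially the same strategy as the paper: the Gauss formula~\eqref{spingauss-} for the Killing equation, the Ricci-form computation with the $c_1$-sign flipped for the curvature, and then a product-Clifford expansion of $\xi\cdot\nu\cdot\Psi$ and $V\cdot\nu\cdot\Psi$ to derive the algebraic relation, from which \eqref{Ide1}--\eqref{Ide4} follow by pairings. The one organizational difference is that you isolate the proportionalities $\pi_1(V)=(1-h)\pi_1(\nu)$ and $\pi_2(V)=-(1+h)\pi_2(\nu)$ up front and use them to see directly that $\gamma_2(V)\varphi_2$ and $\gamma_2(\xi)\varphi_2$ are both expressible in terms of the single cross term $W=\pi_1(\nu)\cdot\psi_1^-\otimes\pi_2(\nu)\cdot\psi_2^+$, whereas the paper establishes the equivalent cancellation by the brute-force expansion of Lemma~\ref{lemacl}; this is a cleaner bookkeeping of the same computation rather than a different argument.
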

\begin{proof}
%First, since $\Psi$ is parallel, we have $ D^{\SR} \Psi = (\nabla^{\SR})^* \nabla^{\SR }\Psi =0$. Hence, by the Schr\"{o}dinger-Lichnerowicz formula, we get 
%\begin{eqnarray}
%\Omega^{\SR} \cdot \Psi =  i \Psi.
%\end{eqnarray}
By the Gauss formula (\ref{spingauss}), the restriction $\varphi_2$ of the parallel spinor $\Psi$ on $\mathbb M_{1}(c_1) \times \mathbb M_2(c_2)$ satisfies
$$\nabla^2_X\varphi_2 = \frac 12 \gamma_2(II X)\varphi_2.$$
%Since the spinor $\Psi$ is parallel, Equality (\ref{RRicci}) gives
%$$\mathrm{Ric}^{\SR}(X)\cdot\Psi = i (X\lrcorner \Omega^{\SR})\cdot_2\Psi $$
%Where $\Ric$ is the Ricci tensor of $\SR$. Therfore, we compute,
%\begin{eqnarray*}
%\gamma_2(\nu\lrcorner\Omega_2^{\SR})\varphi_2 &=&-(\nu\lrcorner \Omega^{\SR})\cdot\nu\cdot\Psi|_M \\ &=&
%+\nu\cdot(\nu\lrcorner \Omega_2^{\SR})\cdot\Psi|_M\\&=& -i  \nu \cdot\mathrm{Ric}^{\SR}\ \nu\cdot\Psi_{\vert_M}\\&=& -\frac i2 \gamma_2(V) \varphi_2 + \frac i2 (1 + h) \varphi_2
%\end{eqnarray*}
%By Equation (\ref{glucose}), we get that 
%\begin{eqnarray}
 %\gamma_2(\Omega^2) \varphi_2  =  \frac i2 (1-h)\varphi_2 + \frac i2 \gamma_2(V) \varphi 
%\label{xi2}
%\end{eqnarray}
Now, for any $X, Y \in \Gamma(TM)$, we have 
\begin{eqnarray*}
\Omega (X, Y) &=& \Omega^{\mathbb{M}_1 (c_1) \times \mathbb{M}_2(c_2)} (X, Y) \\&=&
\mathrm{Ric}^{\mathbb{M}_1 (c_1) } (J\pi_1X, \pi_1Y) - \mathrm{Ric}^{\mathbb{M}_2 (c_2) } (J\pi_2X, \pi_2Y) \\&=& 
\frac{c_1}{4} g(\Chi X +\eta(X) \nu + \Chi fX +\eta(fX)\nu -(V, X)\xi, Y+fY+(V, Y)\nu ) \\&& 
-\frac{c_2}{4} g(\Chi X +\eta(X) \nu - \Chi fX -\eta(fX)\nu +(V, X)\xi, Y - fY-(V, Y)\nu).
\end{eqnarray*}
In the basis $\{e_1, e_2 = \Chi e_1, \xi\}$, we have 
$$\Omega(e_1, e_2) = -\frac{c_1}{2} (h-1) - \frac{c_2}{2} (h+1),$$
$$ \Omega(e_1, \xi) = -\frac{c_1+ c_2}{2} (V, e_1),$$ 
$$ \Omega(e_2, \xi) = -\frac{c_1+ c_2}{2} (V, e_2).$$ 
Now, let's calculate
\begin{eqnarray*}
-\gamma_2(\xi) (\varphi_2) &=& {[\xi \cdot  \nu  \cdot  (\psi_1^- \otimes \psi_2^+) ]}_{\vert_M}\\ &=&
{[\pi_1 (\xi) \cdot \pi_1(\nu) \cdot \psi_1^- \otimes \psi_2^+  + \pi_1(\nu) \cdot \psi_1^- \otimes \pi_2(\xi)\cdot  \psi_2^+]}_{\vert_M} \\ && - [{\pi_1 (\xi) \cdot\psi_1^- \otimes \pi_2(\nu) \cdot \psi_2^+ + \psi_1^- \otimes \pi_2(\xi)\cdot  \pi_2(\nu) \cdot\psi_2^+]}_{\vert_M}.
\end{eqnarray*}
Thus, we get 
\begin{eqnarray*}
-\gamma_2(\xi) (\varphi_2) &=& [{\xi \cdot  \nu  \cdot  (\psi_1^- \otimes \psi_2^+)]}_{\vert_M} \\ &=&i (\vert\pi_1(\nu)\vert^2 - \vert \pi_2(\nu)\vert^2) \varphi_2
 + {[\pi_1(\nu)\cdot\psi_1^- \otimes \pi_2(\xi)\cdot \psi_2^+  - \pi_1 (\xi)\cdot  \psi_1^- \otimes \pi_2(\nu)\cdot  \psi_2^+]}_{\vert_M}  \\ &=& 
 ih \varphi_2  + {[\pi_1(\nu) \cdot \psi_1^- \otimes \pi_2(\xi)\cdot  \psi_2^+  - \pi_1 (\xi)\cdot  \psi_1^- \otimes \pi_2(\nu) \cdot \psi_2^+]}_{\vert_M}.
\end{eqnarray*}
In a similar way, we have 
\begin{eqnarray*}
-\gamma_2(V) (\varphi_2) &=&{[ V \cdot  \nu  \cdot  (\psi_1^- \otimes \psi_2^+) ]}_{\vert_M}\\ &=&
{[\pi_1 (V)\cdot \pi_1(\nu)\cdot \psi_1^- \otimes \psi_2^+  + \pi_1(\nu)\cdot \psi_1^- \otimes \pi_2(V) \cdot\psi_2^+ ]}_{\vert_M}\\ && - {[\pi_1 (V) \cdot\psi_1^- \otimes \pi_2(\nu)\cdot \psi_2^+ + \psi_1^- \otimes \pi_2(V)\cdot \pi_2(\nu)\cdot \psi_2^+  ]}_{\vert_M}\\ &=& 
{[\pi_1(\nu)\cdot \psi_1^- \otimes \pi_2(V)\cdot \psi_2^+  - \pi_1 (V)\cdot \psi_1^- \otimes \pi_2(\nu)\cdot \psi_2^+]}_{\vert_M}.
\end{eqnarray*}
Now, we have 
\begin{eqnarray*}
&& [-\gamma_2(V) - i \gamma_2(\xi) ] (\varphi_2) \\ &=& {[\pi_1(\nu)\cdot \psi_1^- \otimes \pi_2(V)\cdot \psi_2^+  - \pi_1 (V) \cdot\psi_1^- \otimes \pi_2(\nu) \cdot\psi_2^+ ]}_{\vert_M}\\&&  -h \varphi_2  +{[ i \pi_1(\nu) \cdot\psi_1^- \otimes \pi_2(\xi)\cdot \psi_2^+  - i \pi_1 (\xi) \cdot\psi_1^- \otimes \pi_2(\nu)\cdot \psi_2^+]}_{\vert_M} \\ 
&=&  -h \varphi_2  \\ && {\underbrace{+{[\pi_1(\nu)\cdot \psi_1^-  \otimes (\pi_2(V) + i \pi_2(\xi))\cdot \psi_2^+  - (\pi_1(V) +i \pi_1(\xi)) \cdot\psi_1^- \otimes \pi_2 (\nu)\cdot \psi_2^+]}_{\vert_M}}_{= 0 \ \ \text{by Lemma}\ \ \ref{lemacl}}} \\ &=& - h \varphi_2.
\end{eqnarray*}
%\begin{eqnarray} \label{35}
%\frac i2 (1-h) \varphi_2 + \frac i2 \gamma_2(V) \varphi_2 = - \frac 12 (h-1) \gamma_2(\xi) \varphi_2 - \frac 12 \gamma_2(V) \gamma_2 (\xi)  \varphi_2
%\end{eqnarray}
%Taking the Clifford multiplication of the last identity by $V \neq 0$, we get  
  %\begin{eqnarray}\label{36}
%\frac i2 (1-h) \gamma_2 (V) \varphi_2 -\frac i2 \Vert V\Vert^2  \varphi_2 = - \frac 12 (h-1) \gamma_2 (V)\gamma_2 (\xi)\varphi_2 + \frac 12 \Vert V \Vert^2 \gamma_2 (\xi) \varphi_2
%\end{eqnarray}
%Inserting (\ref{35}) into (\ref{36}) and using that $h^2 + \Vert V \Vert^2 = 1$, we ge
   Taking the scalar product of the last identity with $\varphi_2$, then the real part of the scalar product  with $\gamma_2 (e_1) \varphi_2$, then with $\gamma_2 (e_2) \varphi_2$,  we get  (\ref{Ide1}), (\ref{Ide2}), (\ref{Ide3}) and (\ref{Ide4}). 
\end{proof}

\section{Generalized Killing $\spinc$ spinors and isometric immersions}

\begin{lem} \cite{NR12}
Let $E$ be a field of symmetric endomorphisms on a $\spinc$ manifold $M^3$ of dimension $3$, then 
\begin{eqnarray}
 \gamma(E(e_i))\gamma( E(e_j)) -  \gamma(E(e_j)) \gamma( E(e_i ))&=& 2 (a_{j3} a_{i2} - a_{j2}a_{i3}) e_1 \nonumber \\&& +2(a_{i3}a_{j1} - a_{i1}a_{j3}) e_2 \nonumber \\ && + 2(a_{i1}a_{j2} - a_{i2}a_{j1})e_3,
\end{eqnarray}
where $(a_{ij})_{i,j}$ is the matrix of $E$ written in any local orthonormal frame of $TM$.
\label{aij}
\end{lem}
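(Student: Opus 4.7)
The plan is to expand both sides in the local orthonormal frame $\{e_1,e_2,e_3\}$ of $TM$ and to reduce the commutator using the Clifford anticommutation relations together with the special dimension-three identity coming from the complex volume form.

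First, I would write $E(e_i)=\sum_k a_{ik}e_k$ and use bilinearity of Clifford multiplication to get
\begin{equation*}
\gamma(E(e_i))\gamma(E(e_j))-\gamma(E(e_j))\gamma(E(e_i))=\sum_{k,l}a_{ik}a_{jl}\bigl(\gamma(e_k)\gamma(e_l)-\gamma(e_l)\gamma(e_k)\bigr).
\end{equation*}
The diagonal terms $k=l$ cancel, and for $k\neq l$ the relation $\gamma(e_k)\gamma(e_l)+\gamma(e_l)\gamma(e_k)=-2\delta_{kl}\,\mathrm{Id}$ converts the inner bracket into $2\gamma(e_k)\gamma(e_l)$. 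Grouping pairs, the commutator becomes
\begin{equation*}
2\sum_{k<l}(a_{ik}a_{jl}-a_{il}a_{jk})\,\gamma(e_k)\gamma(e_l).
\end{equation*}

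Second, I would invoke the fact, recalled in the preliminaries, that on a $3$-dimensional $\Spinc$ manifold the complex volume form $\omega_{\C}=i^{[(n+2)/2]}e_1\cdot e_2\cdot e_3$ acts as the identity on $\Sigma M$; with $n+1=3$ this gives $\gamma(e_1)\gamma(e_2)\gamma(e_3)=-\mathrm{Id}$. Combined with $\gamma(e_m)^2=-\mathrm{Id}$, this yields the three identities $\gamma(e_1)\gamma(e_2)=\gamma(e_3)$, $\gamma(e_2)\gamma(e_3)=\gamma(e_1)$, and $\gamma(e_1)\gamma(e_3)=-\gamma(e_2)$. Substituting these into the previous display and regrouping the coefficients of $\gamma(e_1)$, $\gamma(e_2)$, $\gamma(e_3)$ reproduces exactly the right-hand side of the statement (where $e_1,e_2,e_3$ on the right are understood under Clifford multiplication).

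The only delicate point is the sign bookkeeping in the second step, which depends on the orientation convention and the specific $\Spinc$ volume form normalization adopted in the paper; once that is pinned down, everything reduces to a routine Clifford-algebra manipulation and there is no genuine obstacle. One may also observe in passing, as a sanity check, that the right-hand side is precisely $2\gamma(E(e_i)\times E(e_j))$, where $\times$ denotes the usual cross product in the frame $\{e_1,e_2,e_3\}$; the appearance of the cross product is exactly the shadow of the Hodge-star/volume-form identification peculiar to dimension three.
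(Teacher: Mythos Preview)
Your argument is correct: the bilinear expansion, the use of the Clifford anticommutation relations to rewrite the commutator as $2\sum_{k<l}(a_{ik}a_{jl}-a_{il}a_{jk})\gamma(e_k)\gamma(e_l)$, and the dimension-three identities $\gamma(e_1)\gamma(e_2)=\gamma(e_3)$, $\gamma(e_2)\gamma(e_3)=\gamma(e_1)$, $\gamma(e_1)\gamma(e_3)=-\gamma(e_2)$ (coming from $-\gamma(e_1)\gamma(e_2)\gamma(e_3)=\mathrm{Id}$, which is exactly the convention used later in the paper, e.g.\ $-\gamma_2(e_1)\gamma_2(e_2)\gamma_2(\xi)\varphi_2=\varphi_2$) give precisely the stated coefficients. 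The paper does not supply its own proof of this lemma---it is quoted from \cite{NR12}---so there is nothing to compare against here; your direct Clifford-algebra computation is the standard one and is complete.
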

\begin{prop}
Let $(M^3,g)$ be a Riemannian $\spinc$ manifold endowed with an almost contact metric structure $(\Chi,\xi,\eta)$. Assume that there exists a vector $V$ and a function $h$ and  a $\spinc$ structure with  non-trivial spinor $\varphi_1$  satisfying
$$\nabla^1_X\varphi_1=-\frac{1}{2}\gamma_1(EX)\varphi_1 \ \ \ \text{and}\ \ \ \ \gamma_1(\xi)\varphi_1=-i\varphi_1,$$
where $E$ is a field of symmetric endomorphisms on $M$. Moreover, we suppose that the curvature 2-form of the connection 
on the auxiliary line bundle associated with the $\spinc$ structure is given by
$$\Omega^1(e_1, e_2) = \frac{c_1}{2} (h-1) - \frac{c_2}{2} (h+1),$$
$$ \Omega^1(e_1, \xi) = \frac{c_1- c_2}{2} (V, e_1),$$ 
$$ \Omega^1(e_2, \xi) = \frac{c_1- c_2}{2} (V, e_2),$$ 
 in the basis $\{e_1,e_2=\Chi e_1,e_3=\xi\}$.
Hence, the Gauss equation is satisfied for $\mathbb M_{1} (c_1) \times \mathbb M_2(c_2)$ if and only if the  Codazzi equation for $\mathbb M_{1} (c_1) \times \mathbb M_2(c_2)$ is satisfied.
\label{GGCC}
\end{prop}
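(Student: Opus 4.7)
The plan is to compute the spinorial curvature $\mathcal{R}^1(X,Y)\varphi_1 := (\nabla^1_X\nabla^1_Y - \nabla^1_Y\nabla^1_X - \nabla^1_{[X,Y]})\varphi_1$ in two different ways and equate the results. Differentiating the generalized Killing equation $\nabla^1_X\varphi_1 = -\tfrac{1}{2}\gamma_1(EX)\varphi_1$ once more yields
\[
\mathcal{R}^1(X,Y)\varphi_1 = -\tfrac{1}{2}\gamma_1\bigl(d^\nabla E(X,Y)\bigr)\varphi_1 + \tfrac{1}{4}\bigl[\gamma_1(EY)\gamma_1(EX) - \gamma_1(EX)\gamma_1(EY)\bigr]\varphi_1,
\]
while the general $\spinc$ curvature formula gives
\[
\mathcal{R}^1(X,Y)\varphi_1 = \tfrac{1}{4}\sum_{i,j}R(X,Y,e_i,e_j)\gamma_1(e_i)\gamma_1(e_j)\varphi_1 + \tfrac{i}{2}\Omega^1(X,Y)\varphi_1.
\]

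By Lemma \ref{aij}, the Clifford commutator above coincides with $\tfrac{1}{4}\sum_{i,j} R_E(X,Y,e_i,e_j)\gamma_1(e_i)\gamma_1(e_j)\varphi_1$, where $R_E(X,Y)Z := g(EY,Z)EX-g(EX,Z)EY$ is the extrinsic quadratic term appearing in the Gauss equation \eqref{gaussSR}. Equating the two expressions for $\mathcal{R}^1(X,Y)\varphi_1$ and rearranging produces the master identity
\[
\tfrac{1}{4}\sum_{i,j}(R-R_E)(X,Y,e_i,e_j)\gamma_1(e_i)\gamma_1(e_j)\varphi_1 + \tfrac{i}{2}\Omega^1(X,Y)\varphi_1 = -\tfrac{1}{2}\gamma_1\bigl(d^\nabla E(X,Y)\bigr)\varphi_1.
\]

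Let $R^{\mathbb{P}}$ denote the intrinsic ambient part of \eqref{gaussSR} (the terms involving $c_1,c_2$ and $f$). Then the Gauss equation is equivalent to $R - R_E = R^{\mathbb{P}}$, and the Codazzi equation \eqref{codazziSR} is equivalent to $d^\nabla E(X,Y) = \mathrm{CZ}(X,Y)$, where $\mathrm{CZ}(X,Y)$ denotes the vector produced by the right-hand side of \eqref{codazziSR}. Hence ``Gauss $\Leftrightarrow$ Codazzi'' reduces, via the master identity together with the injectivity of $W \mapsto \gamma_1(W)\varphi_1$ on real tangent vectors (a consequence of $\varphi_1 \neq 0$ and the fact that $\gamma_1(\xi)\varphi_1 = -i\varphi_1$ makes $\{\varphi_1,\gamma_1(e_1)\varphi_1\}$ a basis of the rank-$2$ fibre of $\Sigma M$), to verifying the purely algebraic identity
\[
\tfrac{1}{4}\sum_{i,j} R^{\mathbb{P}}(X,Y,e_i,e_j)\gamma_1(e_i)\gamma_1(e_j)\varphi_1 + \tfrac{i}{2}\Omega^1(X,Y)\varphi_1 = -\tfrac{1}{2}\gamma_1\bigl(\mathrm{CZ}(X,Y)\bigr)\varphi_1. \qquad (\ast)
\]

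The main obstacle is establishing $(\ast)$. The strategy is to work in the adapted frame $\{e_1,\,e_2 = \Chi e_1,\,e_3 = \xi\}$ and to check $(\ast)$ on the three independent pairs $(e_1,e_2)$, $(e_1,\xi)$, $(e_2,\xi)$. The expansion of $R^{\mathbb{P}}$ is drastically simplified by Lemma \ref{Fo} --- especially $(fe_1,e_2) = 0$, $(fe_1,e_1) = (fe_2,e_2) = -h$, $\eta(V) = 0$, and $f\xi = h\xi - \Chi V$ --- which reduces the endomorphism $f$ to scalar coefficients plus the vector $\Chi V$. Clifford products $\gamma_1(e_i)\gamma_1(e_j)\varphi_1$ involving $\xi$ are then collapsed to scalar multiples of $\varphi_1$ or of $\gamma_1(e_1)\varphi_1$ by using $\gamma_1(\xi)\varphi_1 = -i\varphi_1$. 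The postulated form of $\Omega^1$ is calibrated precisely so that the imaginary contributions arising from these substitutions combine with $\tfrac{i}{2}\Omega^1(X,Y)\varphi_1$ to produce exactly the Clifford image of $\mathrm{CZ}(X,Y)$ on the right-hand side. This bookkeeping is tedious but mechanical, and is the core of the proof.
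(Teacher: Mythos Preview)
Your approach is correct and reaches the same conclusion as the paper, but the organization is genuinely different. The paper does not work with the uncontracted spinorial curvature $\mathcal R^1(X,Y)\varphi_1$; instead it immediately applies the Ricci identity
\[
\sum_{k}\gamma_1(e_k)\,\mathcal R^1(e_k,X)\varphi_1=\tfrac12\gamma_1(\mathrm{Ric}\,X)\varphi_1-\tfrac i2\gamma_1(X\lrcorner\Omega^1)\varphi_1,
\]
and then takes the real scalar product with each of the four real basis spinors $\varphi_1,\gamma_1(e_1)\varphi_1,\gamma_1(e_2)\varphi_1,\gamma_1(\xi)\varphi_1$ for $X=e_1,e_2,\xi$, producing twelve scalar relations (``System~1'') that visibly pair Gauss-type terms with Codazzi-type terms. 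Your route bypasses the Ricci contraction entirely: you keep the full identity, use the $3$-dimensional Clifford relation $\gamma_1(e_i)\gamma_1(e_j)\varphi_1=\pm\gamma_1(e_k)\varphi_1$ to rewrite the curvature block as $\gamma_1(W)\varphi_1$ for a real vector $W$, absorb $\tfrac i2\Omega^1(X,Y)\varphi_1$ via $\gamma_1(\xi)\varphi_1=-i\varphi_1$, and then appeal once to the injectivity of $W\mapsto\gamma_1(W)\varphi_1$. The upshot is that your ``master identity minus $(\ast)$'' is exactly the Hodge-dual packaging of the paper's System~1. What you gain is a cleaner logical reduction to a single algebraic identity $(\ast)$; what the paper gains is that its twelve equations are already split into components, so the final ``it is clear that Gauss $\Leftrightarrow$ Codazzi'' can be read off term by term. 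Both proofs leave that last bookkeeping implicit, and both tacitly use the intrinsic definition of $f$ in terms of $h$ and $V$ (the formulas of Lemma~\ref{Fo}) when interpreting the Gauss and Codazzi equations for $\mathbb M_1(c_1)\times\mathbb M_2(c_2)$, since $f$ is not part of the hypotheses of the proposition.
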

\begin{proof}
We compute the spinorial curvature $\mathcal{R}^1$ on $\varphi_1$, we get
$$\mathcal{R}^1_{X, Y} \varphi_1= -\frac 12 \gamma_1(d{^\nabla} E(X, Y))\varphi_1 + \frac 14 \big(\gamma_1(EY)\gamma_1(EX) - \gamma_1(EX)\gamma_1( EY)\big)\varphi_1.$$
In the basis $\{e_1, e_2 = \Chi e_1, e_3 = \xi\}$, the Ricci identity (\ref{RRicci}) gives that 
\begin{eqnarray*}
 \frac 12 \gamma_1(\mathrm{Ric}(X))\varphi_1 -\frac i2 \gamma_1(X\lrcorner\Omega^1)\varphi_1 &=& \frac 14 \sum_{k=1}^3 \gamma_1(e_k) \big(\gamma_1(EX)\gamma_1( Ee_k) -\gamma_1( Ee_k)\gamma_1(EX)\big)\varphi_1 \\&&
-\frac 12 \sum_{k=1}^3 \gamma_1(e_k)\gamma_1( d^{\nabla} E(e_k, X ))\varphi_1.
\end{eqnarray*}
By Lemma \ref{aij} and for $X= e_1$, the last identity  becomes   
\begin{eqnarray}
 &\ &(\mathrm{R}_{1221} +\mathrm{R}_{1331} -a_{11}a_{33} - a_{11}a_{22} + a_{13}^2 +a_{12}^2 + \frac {c_1}{2}(h-1)-\frac{c_2}{2}(h+1) )\gamma_1(e_1 )\varphi_1 \nonumber\\ && +( \mathrm{R}_{1332} -a_{12}a_{33} + a_{32}a_{13})\gamma_1(e_2) \varphi_1 \nonumber \\ && + (\mathrm{R}_{1223} -a_{22}a_{13} + a_{32}a_{12})\gamma_1(e_3) \varphi_1  \nonumber\\
 &&-\frac {c_1-c_2}{2} (V, e_1) \varphi_1
   \\ &=&
- \gamma_1(e_2)\gamma_1( d^{\nabla}E (e_2, e_1))\varphi_1- \gamma_1(e_3)\gamma_1( d^{\nabla}E (e_3, e_1))\varphi_1.\nonumber
\label {e1varphi}
\end{eqnarray}
Since $\vert \varphi\vert$ is constant ($\vert \varphi \vert =1$), the set $\{\varphi_1, \gamma_1(e_1)\varphi_1, \gamma_1(e_2)\varphi_1, \gamma_1(e_3)\varphi_1\}$ is an orthonormal frame of $\Sigma M$ with respect to the real scalar product $\pre ( ., .)$. Hence, from Equation (\ref{e1varphi}) we deduce
\begin{eqnarray*}
\mathrm{R}_{1221} +\mathrm{R}_{1331} -(a_{11}a_{33} + a_{11}a_{22} - a_{13}^2 -a_{12}^2 ) &+& \frac {c_2}{2} (h-1) -\frac{c_2}{2} (h+1) \\ &=& g(d^\nabla E(e_1, e_2), e_3) - g(d^\nabla E(e_1, e_3), e_2) 
 \end{eqnarray*}
 \begin{eqnarray*}
\mathrm{R}_{1332} -(a_{12}a_{33} - a_{32}a_{13}) &=& g(d^\nabla E(e_1, e_3), e_1)\\
\mathrm{R}_{1223} -(a_{22}a_{13} - a_{32}a_{12})&=& -g(d^\nabla E(e_1, e_2), e_1)\\
 - \frac {(c_1-c_2)}{2} (V, e_1) &= & g(d^\nabla E(e_2, e_1), e_2) + g(d^\nabla E(e_3, e_1), e_3) 
\end{eqnarray*}
The same computation holds for the unit vector fields $e_2$ and $e_3$ and we get
\begin{eqnarray*}
\mathrm{R}_{2331}  -(a_{12}a_{33} - a_{13}a_{23})& =& - g(d^\nabla E(e_2, e_3), e_2)\\
\mathrm{R}_{2332} + \mathrm{R}_{2112}  -(a_{22}a_{33} + a_{22}a_{11} - a_{13}^2 - a_{12}^2) && + \frac {c_1}{2}(h-1) - \frac{c_2}{2} (h+1)  \\ &=& g(d^\nabla E(e_2, e_3), e_1) +g(d^\nabla E(e_1, e_2), e_3)\\
\mathrm{R}_{2113} -(a_{23}a_{11} - a_{12}a_{13}) &=& - g(d^\nabla E(e_1, e_2), e_2)\\
 -\frac {(c_1-c_2)}{2} (V, e_2)&=& g(d^\nabla E(e_1, e_2), e_1) +  g(d^\nabla E(e_3, e_2), e_3)\\
\mathrm{R}_{3221}  -(a_{13}a_{22} - a_{23}a_{21}) -\frac {(c_1-c_2)}{2} (V, e_2)  &=& - g(d^\nabla E(e_2, e_3), e_3)\\
\mathrm{R}_{3112}  - (a_{32}a_{11} - a_{31}a_{12}) + \frac {(c_1-c_2)}{2} (V, e_1) &=& g(d^\nabla E(e_1, e_3), e_3)\\
\mathrm{R}_{3113}+ \mathrm{R}_{3223}-(a_{22}a_{33} - a_{11}a_{33} + a_{13}^2 + a_{23}^2) &= & g(d^\nabla E(e_2, e_3), e_1) - g(d^\nabla E(e_1, e_3), e_2)\\
g(d^\nabla E(e_2, e_3), e_2) &=& - g(d^\nabla E(e_1, e_3), e_1)
\end{eqnarray*}
The last twelve equations will be called System 1 and it is clear that  the Gauss equation for $\mathbb M_1(c_1) \times \mathbb M_2(c_2)$ is satisfied if and only if the Codazzi equation
 for $\mathbb M_1(c_1) \times \mathbb M_2(c_2)$ is satisfied.
 \end{proof}
\begin{lem} Under the same condition as Proposition \ref{GGCC}, we have $\nabla_X \xi = \Chi EX$.
%$$(\nabla_X\Chi) Y = \eta (Y) EX- g(EX, Y)\xi$$
\label{chii}
\end{lem}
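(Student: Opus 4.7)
The plan is to differentiate the algebraic constraint $\gamma_1(\xi)\varphi_1 = -i\varphi_1$ using the spinorial connection $\nabla^1$ and then compare with the generalized Killing equation. Applying $\nabla^1_X$ to both sides and using that $\nabla^1$ is compatible with Clifford multiplication via the Levi-Civita connection on $TM$, I get
\[
\gamma_1(\nabla_X\xi)\varphi_1 + \gamma_1(\xi)\nabla^1_X\varphi_1 = -i\nabla^1_X\varphi_1.
\]
Substituting $\nabla^1_X\varphi_1 = -\tfrac12\gamma_1(EX)\varphi_1$ on both sides yields
\[
\gamma_1(\nabla_X\xi)\varphi_1 \;=\; \tfrac{1}{2}\gamma_1(\xi)\gamma_1(EX)\varphi_1 + \tfrac{i}{2}\gamma_1(EX)\varphi_1.
\]

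Next I would use the Clifford anticommutation rule $\gamma_1(\xi)\gamma_1(EX)+\gamma_1(EX)\gamma_1(\xi) = -2g(\xi,EX)$ together with $\gamma_1(\xi)\varphi_1 = -i\varphi_1$ to rewrite
\[
\gamma_1(\xi)\gamma_1(EX)\varphi_1 \;=\; i\gamma_1(EX)\varphi_1 - 2\eta(EX)\varphi_1.
\]
Plugging back gives the clean form
\[
\gamma_1(\nabla_X\xi)\varphi_1 \;=\; i\gamma_1(EX)\varphi_1 - \eta(EX)\varphi_1.
\]

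The key remaining step is to identify this right-hand side as $\gamma_1(\Chi EX)\varphi_1$. To this end I would prove the general identity
\[
\gamma_1(\Chi Y)\varphi_1 \;=\; i\gamma_1(Y)\varphi_1 - \eta(Y)\varphi_1 \qquad \text{for all } Y\in\Gamma(TM).
\]
For $Y = \xi$ both sides vanish. For $Y$ orthogonal to $\xi$, I would exploit the volume-form action: in dimension $3$, $\omega_{\mathbb{C}} = -\gamma_1(e_1)\gamma_1(e_2)\gamma_1(e_3)$ acts as the identity on $\Sigma M$, so $\gamma_1(e_1)\gamma_1(e_2)\varphi_1 = -i\varphi_1$ (using $\gamma_1(\xi)\varphi_1 = -i\varphi_1$), whence $\gamma_1(\Chi e_1)\varphi_1 = \gamma_1(e_2)\varphi_1 = i\gamma_1(e_1)\varphi_1$; linearity extends this to the whole $\xi^{\perp}$ distribution, and adding the $\xi$-component produces the stated identity.

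Combining the two formulas gives $\gamma_1(\nabla_X\xi - \Chi EX)\varphi_1 = 0$. Finally, the map $Y\mapsto \gamma_1(Y)\varphi_1$ is injective on $TM$: this follows because $\varphi_1$ and $\gamma_1(e_1)\varphi_1$ lie in the distinct $\mp i$-eigenspaces of $\gamma_1(\xi)$, so $\{\varphi_1,\gamma_1(e_1)\varphi_1\}$ is a complex basis of $\Sigma M$, and a real vector $Y = ae_1 + be_2 + c\xi$ with $\gamma_1(Y)\varphi_1 = (a+bi)\gamma_1(e_1)\varphi_1 - ic\varphi_1 = 0$ forces $a=b=c=0$. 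The conclusion $\nabla_X\xi = \Chi EX$ follows. The only subtle step is the volume-form identity, which is routine once the eigenvalue condition on $\varphi_1$ is used.
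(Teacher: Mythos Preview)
Your proof is correct and follows essentially the same route as the paper's: differentiate the constraint $\gamma_1(\xi)\varphi_1=-i\varphi_1$, use the volume-form relation $\gamma_1(e_1)\gamma_1(e_2)\varphi_1=-i\varphi_1$ (equivalently $\gamma_1(e_2)\varphi_1=i\gamma_1(e_1)\varphi_1$), and conclude by injectivity of $Y\mapsto\gamma_1(Y)\varphi_1$. The paper is more terse, expanding $EX$ directly in the frame $\{e_1,e_2,\xi\}$ to reach $\gamma_1(\nabla_X\xi)\varphi_1 - g(EX,e_1)\gamma_1(e_2)\varphi_1 + g(EX,e_2)\gamma_1(e_1)\varphi_1=0$; your packaging via the intermediate identity $\gamma_1(\Chi Y)\varphi_1 = i\gamma_1(Y)\varphi_1 - \eta(Y)\varphi_1$ and the explicit injectivity argument is a slightly cleaner and more self-contained presentation of the same computation.
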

\begin{proof}
In fact, we simply compute the derivative of $\gamma_1(\xi)\varphi_1= -i \varphi_1$ in the direction of $X\in \Gamma (TM)$ to get
 
 \begin{eqnarray*}
 \gamma_1(\nabla_X \xi)\varphi &=& \frac i2 \gamma_1(EX)\varphi _1+ \frac 12 \gamma_1(\xi)\gamma_1( EX)\varphi_1 
\end{eqnarray*}
Using that $-i \gamma_1(e_2)\varphi_1 = \gamma_1(e_1)\varphi_1$, the last equation reduces to 
$$\gamma_1(\nabla_X\xi)\varphi_1 - g(EX, e_1) \gamma_1(e_2)\varphi_1 + g(EX, e_2) \gamma_1(e_1)\varphi_1 =0.$$
Finally $\nabla_X \xi= \Chi EX$. 
\end{proof}
%Now, we compute the derivative of $-i e_2\bullet\phi = e_1\bullet\phi$ in the direction of $e_1$ to get
%$$\nabla_{e_1} (\Chi e_1)\bullet\phi -\frac 12 e_2\bullet Ee_1\bullet \phi = i \nabla_{e_1}e_1 \bullet \phi -\frac i2 e_1\bullet Ee_1\bullet\phi.$$
%But, using that $\xi\bullet\phi = -i\phi$, we have
%$$\frac 12 e_2\bullet Ee_1\bullet \phi -\frac i2 e_1\bullet Ee_1\bullet\phi = -a_{11} \xi\bullet\phi -a_{12} \phi.$$
%Denoting by $\Gamma_{ij}^k$ the Christoffel symbols of $\{e_1, \Chi e_1, \xi\}$, we have $\nabla_{e_1}e_1 = \Gamma_{11}^1 e_1 + \Gamma_{11}^2 e_2 + \Gamma_{11}^3 e_3$. Moreover, using that $\nabla_{e_1}e_3 = \Chi Ee_1$, we get
%$$\Gamma_{11}^3 = g(\nabla_{e_1}e_1, e_3) = -g (e_1, \nabla_{e_1}e_3) = a_{12}.$$
%Hence, $\nabla_{e_1} (\Chi e_1)\bullet\phi = -a_{11} \xi\bullet\phi +  \Gamma_{11}^1 e_2\bullet\phi +  \Gamma_{11}^2 e_2\bullet\phi.$ Finally
%$$\nabla_{e_1} (\Chi e_1)\bullet\phi - \Chi(\nabla_{e_1}e_1)\bullet\phi = -a_{11} \xi\bullet\phi,$$
%which is Equation (\ref{cond3}) for $X = Y =e_1$. Similary, 
%we compute the derivative of $-i e_2\bullet\phi = e_1\bullet\phi$ in the direction of $e_2$ and $\xi$ to get Equation 
%(\ref{cond3}) for any $X, Y\in \Gamma(TM)$. 
\begin{prop}
Let $(M^3,g)$ be a Riemannian $\spinc$ manifold endowed with an almost contact metric structure $(\Chi,\xi,\eta)$. Assume that there exist a nonzero vector field  $V$ and a function $h$ such that there exists a $\spinc$ structure with  non-trivial spinor $\varphi$  satisfying
$$\nabla^2_X\varphi=\frac{1}{2}\gamma_2(EX)\varphi \ \ \ \text{and}\ \ \ \ \ \gamma_2(V)\varphi_2 = -i \gamma_2(\xi)\varphi_2 + h \varphi_2,$$
where $E$ is a field of symmetric endomorphisms on $M$. Moreover, we suppose that the curvature 2-form of the connection 
on the auxiliary line bundle associated with the $\spinc$ structure is given by
$$\Omega^2(e_1, e_2) = -\frac {c_1}{2} (h-1) - \frac{c_2}{2} (h+1)$$
$$\Omega^2 (e_1, \xi) =- \frac {(c_1+c_2)}{2}(e_1, V)$$
$$\Omega^2 (e_2, \xi) = -\frac {(c_1+c_2)}{2} (e_2, V)$$
 in the basis $\{e_1,e_2=\Chi e_1,e_3=\xi\}$.
The Gauss equation for $\mathbb M_1(c_1) \times \mathbb M_2(c_2)$ is satisfied if and only if the Codazzi equation
 for $\mathbb M_1(c_1) \times \mathbb M_2(c_2)$ is satisfied.
\label{GGCC2}
\end{prop}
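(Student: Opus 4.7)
The plan is to mirror the argument of Proposition \ref{GGCC} almost verbatim, tracking how the sign flip in the generalized Killing equation (now $+\frac{1}{2}\gamma_2(EX)$ instead of $-\frac{1}{2}\gamma_1(EX)$) and the different signs in the prescribed $\Omega^2$ propagate through the computation. The key technical input is again that $\{\varphi_2,\gamma_2(e_1)\varphi_2,\gamma_2(e_2)\varphi_2,\gamma_2(e_3)\varphi_2\}$ is a real orthonormal frame of $\Sigma M$ with respect to $\pre(\cdot,\cdot)$: because $\gamma_2(EX)$ is skew-Hermitian, the generalized Killing equation still forces $|\varphi_2|$ to be constant, and on a $3$-manifold the spinor bundle has real rank $4$.

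First I would differentiate $\nabla^2_X\varphi_2=\tfrac{1}{2}\gamma_2(EX)\varphi_2$ once more and antisymmetrize to get
\[
\mathcal{R}^2_{X,Y}\varphi_2 \;=\; \tfrac{1}{2}\,\gamma_2\bigl(d^{\nabla}E(X,Y)\bigr)\varphi_2 \;+\; \tfrac{1}{4}\bigl(\gamma_2(EY)\gamma_2(EX)-\gamma_2(EX)\gamma_2(EY)\bigr)\varphi_2,
\]
noting that the sign of the $d^{\nabla}E$ term and the order of the Clifford commutator both reverse relative to the $\spinc$ structure with $-\tfrac{1}{2}$. Then I plug this into the Ricci identity \eqref{RRicci}, insert the given expressions for $\Omega^2(e_i,e_j)$ (with their specific signs in front of $c_1$ and $c_2$), and use Lemma \ref{aij} to expand the quadratic Clifford terms. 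Taking successively $X=e_1$, $e_2$, $e_3$ produces, in each case, an identity of the form
\[
A_0\,\varphi_2 \;+\; \sum_{k=1}^{3} A_k\,\gamma_2(e_k)\varphi_2 \;=\; 0,
\]
where the coefficients $A_0,\dots,A_3$ are explicit combinations of the curvature components $\mathrm{R}_{ijkl}$, the entries $a_{ij}$ of $E$, the constants $c_1,c_2$, and the quantities $g(d^{\nabla}E(e_i,e_j),e_k)$ and $(V,e_i)$.

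By $\pre$-orthonormality of the frame, each $A_j$ must vanish, yielding a system of twelve scalar equations — the exact analog of the ``System 1'' displayed in the proof of Proposition \ref{GGCC}, but with the sign adjustments coming from the flipped Killing equation and from the replacement $\Omega^1\leadsto\Omega^2$. Inspecting this new system shows that each Gauss-type relation for $\mathbb M_1(c_1)\times\mathbb M_2(c_2)$ differs from a Codazzi-type relation for $\mathbb M_1(c_1)\times\mathbb M_2(c_2)$ only by terms that cancel thanks to the precise coefficients in $\Omega^2$, so the Gauss equation holds if and only if the Codazzi equation holds.

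The main obstacle is the sign bookkeeping: one must check that the minus signs in front of $c_1$ that appear in $\Omega^2(e_1,e_2)$ and in $\Omega^2(e_i,\xi)$ (together with the reversed sign in the Killing equation) combine so that the curvature terms $\tfrac{c_1}{4}(X+fX)\wedge(Y+fY)$ and $\tfrac{c_2}{4}(X-fX)\wedge(Y-fY)$ in the Gauss equation are reproduced with the correct signs, and similarly for the $V$-dependent terms of the Codazzi equation. Unlike Proposition \ref{GGCC}, the algebraic constraint $\gamma_2(V)\varphi_2=-i\gamma_2(\xi)\varphi_2+h\varphi_2$ is not needed to carry out the coefficient matching, since the four-element frame is automatic; that constraint will instead play its role at the reconstruction step (recovering $V$, $h$, $\xi$ from $\varphi_2$ via the identities \eqref{Ide1}--\eqref{Ide4}), analogous to how $\gamma_1(\xi)\varphi_1=-i\varphi_1$ was exploited in Lemma \ref{chii}.
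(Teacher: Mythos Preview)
Your outline tracks the paper's argument: differentiate the Killing equation, feed the resulting $\mathcal R^2$ into the Ricci identity with the prescribed $\Omega^2$, expand via Lemma~\ref{aij}, and project onto the real frame $\{\varphi_2,\gamma_2(e_1)\varphi_2,\gamma_2(e_2)\varphi_2,\gamma_2(\xi)\varphi_2\}$ to get a twelve-equation ``System~2''. That is exactly what the paper does.

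However, your last paragraph contains a real gap. You claim that the constraint $\gamma_2(V)\varphi_2=-i\gamma_2(\xi)\varphi_2+h\varphi_2$ is ``not needed to carry out the coefficient matching, since the four-element frame is automatic''. This is incorrect, and the paper uses the constraint in an essential way. Unlike in Proposition~\ref{GGCC}, $\varphi_2$ is \emph{not} an eigenspinor of $\gamma_2(\xi)$, so multiplication by $i$ does not simply permute the real basis. When you project the imaginary contributions $\tfrac{i}{2}\Omega^2(e_1,e_2)\gamma_2(e_2)\varphi_2$ and $\tfrac{i}{2}\Omega^2(e_i,\xi)\gamma_2(\xi)\varphi_2$ onto the frame, the resulting real coefficients are the quantities $-i(\gamma_2(e_k)\varphi_2,\varphi_2)$. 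A~priori these are unknown; it is precisely the constraint, via the identities \eqref{Ide1}--\eqref{Ide4}, that identifies them with $(V,e_1)$, $(V,e_2)$ and $h$. Without this identification the twelve projected equations cannot be recognised as the Gauss and Codazzi relations for $\mathbb M_1(c_1)\times\mathbb M_2(c_2)$, since those relations involve $h$ and $V$ explicitly through $f$. The paper makes this dependence explicit: its proof opens by recording that \eqref{Ide1}--\eqref{Ide4} hold, System~2 is written with factors such as $(V,e_1)^2$, $(V,e_1)(V,e_2)$ and $h\bigl(c_1(h-1)+c_2(h+1)\bigr)$, and the final equation of System~2 even carries the underbrace $(\gamma_2(V)\varphi_2,\varphi_2)=0$ to justify a cancellation. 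So the algebraic constraint is not deferred to a later reconstruction step; it is what lets you read System~2 in the form required here.
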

\begin{proof} First, from $ \gamma_2(V)\varphi_2 = -i \gamma_2(\xi)\varphi_2 + h \varphi_2$,  we have that 
(\ref{Ide1}), (\ref{Ide2}), (\ref{Ide3}) and (\ref{Ide4}) are satisfied. 
We compute the spinorial curvature $\mathcal{R}^2$ on $\varphi_2$, we get
$$\mathcal{R}^2_{X, Y} \varphi_2= \frac 12 \gamma_2(d{^\nabla} E(X, Y))\varphi_2 + \frac 14 \big(\gamma_2(EY)\gamma_2( EX) - \gamma_2(EX) \gamma_2(EY)\Big)\varphi_2.$$
In the basis $\{e_1, e_2 = \Chi e_1, e_3 = \xi\}$, the Ricci identity (\ref{RRicci}) gives that 
\begin{eqnarray*}
 \frac 12 \gamma_2(\mathrm{Ric}(X))\varphi_2 -\frac i2 \gamma_2(X\lrcorner\Omega^2)\varphi_2 &=& \frac 14 \sum_{k=1}^3 \gamma_2(e_k)\big( \gamma_2(EX)\gamma_2( Ee_k) - \gamma_2(Ee_k)\gamma_2( EX)\big)\varphi_2\\&+&
\frac 12 \sum_{k=1}^3  \gamma_2(e_k) \gamma_2(d^{\nabla} E(e_k, X ))\varphi_2.
\end{eqnarray*}
By Lemma \ref{aij} and for $X= e_1$, the last identity  becomes   
\begin{eqnarray}
 &\ &(\mathrm{R}_{1221} +\mathrm{R}_{1331} -a_{11}a_{33} - a_{11}a_{22} + a_{13}^2 +a_{12}^2) \gamma_2(e_1) \varphi_2 \\ && + \frac i2 (c_1+c_2)(V, e_1) \gamma_2(\xi)\varphi_2  + \frac i2 [c_1(h-1)+c_2(h+1)] \gamma_2(e_2) \varphi_2  \nonumber\\ && +( \mathrm{R}_{1332} -a_{12}a_{33} + a_{32}a_{13})\gamma_2(e_2) \varphi_2 \nonumber \\ && + (\mathrm{R}_{1223} -a_{22}a_{13} + a_{32}a_{12})\gamma_2(e_3)\varphi_2  \nonumber\\
 &=&
\gamma_2( e_2) \gamma_2(d^{\nabla}E (e_2, e_1))\varphi_2 + \gamma_2(e_3)\gamma_2(d^{\nabla}E (e_3, e_1))\varphi_2. \nonumber
\label {e1varphi}
\end{eqnarray}
Since $\vert \varphi\vert$ is constant ($\vert \varphi \vert =1$), the set $\{\varphi_2, \gamma_2(e_1)\varphi_2, \gamma_2(e_2) \varphi_2, \gamma_2(e_3)\varphi_2\}$ is an orthonormal frame of $\Sigma M$ with respect to the real scalar product $\pre ( ., .)$. Hence, from Equation (\ref{e1varphi}) we deduce
\begin{eqnarray*}
  && \mathrm{R}_{1221} +\mathrm{R}_{1331} -(a_{11}a_{33} + a_{11}a_{22} - a_{13}^2 -a_{12}^2 ) \\ & -&  
   \frac 12 (c_1 +c_2) (V, e_1)^2   - \frac 12 h\Big( c_1(h-1)+c_2(h+1)\Big)   \\ &=&  g(d^\nabla E(e_2, e_1), e_3) - g(d^\nabla E(e_3, e_1), e_2)
   \end{eqnarray*}
   \begin{eqnarray*}
\mathrm{R}_{1332} -(a_{12}a_{33} - a_{32}a_{13}) - \frac 12 (c_1 +c_2)(V, e_1)(V, e_2)  &=& - g(d^\nabla E(e_1, e_3), e_1)\\
\mathrm{R}_{1223} -(a_{22}a_{13} - a_{32}a_{12}) + \frac 12 \Big(c_1(h-1)+c_2(h+1)\Big) (V, e_2)&=& -g(d^\nabla E(e_2, e_1), e_1)\\
 \frac 1 2  (c_1 +c_2)h(V, e_1)- \frac 12 \Big( c_1(h-1)+c_2(h+1)\Big) (V, e_1)  &= & -g(d^\nabla E(e_2, e_1), e_2) \\ &&  - g(d^\nabla E(e_3, e_1), e_3) 
\end{eqnarray*}
The same computation holds for the unit vector fields $e_2$ and $e_3$ and we get
\begin{eqnarray*}
\mathrm{R}_{2331}  -(a_{12}a_{33} - a_{13}a_{23}) -\frac 12 (c_1 +c_2)(V, e_2) (V, e_1) & =&  g(d^\nabla E(e_2, e_3), e_2)
\end{eqnarray*}
\begin{eqnarray*}
&& \mathrm{R}_{2332} + \mathrm{R}_{2112}  -(a_{22}a_{33} + a_{22}a_{11} - a_{13}^2 - a_{12}^2)      \\ & -& \frac 12(c_1+c_2) (e_2, V)^2 -  \frac 12 h\Big(c_1(h-1)+c_2(h+1)\Big) 
\\
&=& -g(d^\nabla E(e_2, e_3), e_1) -g(d^\nabla E(e_1, e_2), e_3)
\end{eqnarray*}
\begin{eqnarray*}
\mathrm{R}_{2113} -(a_{23}a_{11} - a_{12}a_{13}) -\frac 12 \Big(c_1(h-1)+c_2(h+1)\Big) (V, e_1) &=&  g(d^\nabla E(e_1, e_2), e_2)\\
  -\frac 12 \Big( c_1(h-1) +c_2(h+1)\Big) (V, e_2)  + \frac 12(c_1 +c_2) h (V, e_2) &=& - g(d^\nabla E(e_1, e_2), e_1) \\ &&  -  g(d^\nabla E(e_3, e_2), e_3)\\
\mathrm{R}_{3221}  -(a_{13}a_{22} - a_{23}a_{21}) +\frac 12 (c_1 +c_2)h(V, e_2)   &=&  g(d^\nabla E(e_2, e_3), e_3)\\
\mathrm{R}_{3112}  - (a_{32}a_{11} - a_{31}a_{12}) - \frac 12 (c_1 +c_2)h (V, e_1) &=& -g(d^\nabla E(e_1, e_3), e_3)\end{eqnarray*}
\begin{eqnarray*}
&& \mathrm{R}_{3113}+ \mathrm{R}_{3223}-(a_{22}a_{33} - a_{11}a_{33} + a_{13}^2 + a_{23}^2) \\   &-& \frac 12 (c_1 +c_2)(e_1, V)^2  - \frac 12 (c_1 +c_2)(V, e_2)^2 \\ &= & -g(d^\nabla E(e_2, e_3), e_1) +g(d^\nabla E(e_1, e_3), e_2)\end{eqnarray*}
\begin{eqnarray*}
   {\underbrace{-\frac i2 (e_1, V) (\gamma_2(e_1) \varphi_2, \varphi_2) - \frac i2 (V, e_2) (\gamma_2(e_2) \varphi_2, \varphi_2)}_{(\gamma_2(V)\varphi_2, \varphi_2)= 0}} &=& - g(d^\nabla E(e_2, e_3), e_2)  - g(d^\nabla E(e_1, e_3), e_1)
\end{eqnarray*}
The last twelve equations will be called System 2 and it is clear that   the Gauss equation for $\mathbb M_{1} (c_1) \times \mathbb M_2(c_2)$ is satisfied if and only if the Codazzi equation
 for $\mathbb M_{1} (c_1) \times \mathbb M_2(c_2)$ is satisfied. \end{proof}

\subsection{Spinorial characterization of hypersurfaces of $\mathbb M_{1} (c_1) \times \mathbb M_2(c_2)$}
The main goal of this section is to prove Theorem \ref{thmCM}.
\begin{proof}[Proof of Theorem \ref{thmCM}]
 It is clear from the previous section that Assesrtion 1 implies Assertion 2. Now, Assume that Assertion 2 holds.
 We have to establish the compatibility equation (\ref{gaussSR}), (\ref{codazziSR}), (\ref{Structure1}), (\ref{Structure2}), (\ref{Structure3}), (\ref{Structure4}), (\ref{Structure5}) and (\ref{Structure6}). 
 First we define $f: TM \rightarrow TM$ by $(fe_1, e_1) =(fe_2, e_2)= -h, (fe_1, e_2) = 0$ and $(f\xi, e_1) = (V, e_2)$, $(f\xi, e_2) = -(V, e_1)$. 
Since $\gamma_2(V)  \varphi_2 = -i \gamma_2(\xi) \varphi_2 + h \varphi_2$. It is clear that 
$$\left\{
\begin{array}{l}
h^2 + \Vert V \Vert^2 = 1\\
(V, e_1) = -i (\gamma_2(e_2)\varphi_2, \varphi_2)\\
(V, e_2) = i (\gamma_2(e_1)\varphi_2, \varphi_2)\\
(V, \xi) = 0\\
fV = -hV\\
f^2 = \mathrm{Id} - (V, \cdot) V
\end{array}
\right.
$$
So Equations  (\ref{Structure1}), (\ref{Structure2}), (\ref{Structure3}) are satisfied. Moreover, by Lemma \ref{chii}, we have
\begin{eqnarray*}
(\nabla_{e_1} V , \xi) &=& (V, e_1) (\nabla_{e_1} e_1, \xi) + (V, e_2) (\nabla_{e_1}e_2, \xi) \\ &=&
-(V, e_1) (e_1 \Chi Ee_1) - (V, e_2) (e_2, \Chi Ee_1) \\ &=&
(V, e_1) E_{12} - (V, e_2) E_{11} + E_{11} (V, e_1) - E_{12} (V, e_2).
\end{eqnarray*}
By a similar computation, we get that Equation (\ref{Structure5}) is satisfied. Now, Equation (\ref{Structure6}) is also satisfied because 
\begin{eqnarray*}
X(h) &=& (i \gamma_2({\underbrace{\nabla_X\xi}_{=\Chi EX}})\varphi_2, \varphi_2) + \frac i2 (\gamma_2(\xi) \gamma_2(EX)\varphi_2, \varphi_2) - \frac i2 (\gamma_2(EX) \gamma_2(\xi )\varphi_2, \varphi_2) \\ &=&
i E(X, e_1) (\gamma_2(e_2)\varphi_2, \varphi_2) - i E(X, e_2) (\gamma_2(e_1 )\varphi_2, \varphi_2)
\\ && \frac i2 E(X, e_2) (\gamma_2(\xi)\gamma_2(e_1)\varphi_2, \varphi_2) - \frac i2 E(X, e_1) (\gamma_2(e_1 )\gamma_2( \xi) \varphi_2, \varphi_2) \\
&& +\frac i2 E(X, e_2) (\gamma_2(\xi)\gamma_2( e_2)\varphi_2, \varphi_2) - \frac i2 E(X, e_2) (\gamma_2(e_2)  \gamma_2(\xi) \varphi_2, \varphi_2)
\\ && - \frac i2 E(X, \xi) \vert \varphi_2\vert^2 + \frac i2 E(X, \xi) \vert \varphi_2\vert^2 \\ 
&=& -(V, e_1) E(X, e_1) - E(X, e_2) (V, e_2) \\
&& \frac i2 E(X, e_1) (\gamma_2(e_2 ) \varphi_2, \varphi_2) + \frac i2 E(X, e_1) (\gamma_2(e_2)\varphi_2, \varphi_2) \\
&& - \frac i2 E(X, e_2) (\gamma_2(e_1) \varphi_2, \varphi_2) - \frac i2 E(X, e_2) (\gamma_2(e_1) \varphi_2, \varphi_2) \\
&=& - 2 E(X, e_2) (V, e_2) - 2 E(X, e_1) (V, e_1) \\
&=& -2 (EV, X).
\end{eqnarray*}
Now, we have 
\begin{eqnarray*}
(\nabla_{e_1} fe_1, e_1 ) &=& e_1 (fe_1, e_1) + (fe_1, \xi) (\nabla_{e_1}\xi, e_1) \\
&=& e_1 (-h) + (V, e_2) (\Chi Ee_1, e_1) \\
&=& 2 (EV, e_1)- (V, e_2) E_{12} \\
&=& 
2 E_{11} (e_1, V) - (e_2, V) E_{12},
\end{eqnarray*}
and
\begin{eqnarray*}
(f(\nabla_{e_1} e_1) , e_1) &=& (\nabla_{e_1} e_1, fe_1) \\ 
&=& (fe_1, e_2) (\nabla_{e_1}{e_1}, e_2 ) + (fe_1, \xi) (\nabla_{e_1}e_1, \xi) \\ 
&=& -(\nabla_{e_1}{\xi}, e_1) (V, e_2) = -(\Chi Ee_1, e_1) (V, e_2) = E_{12} (V, e_2)
\end{eqnarray*}
Thus, $((\nabla_{e_1}f) e_1, e_1 )  = 2 E_{11} (e_1, V) $. By a similar computation, one can get (\ref{Structure4}). Solving System 1 and System 2 simultaneously gives the Gauss and the Codazzi equations. Finally, we have to check that  $\frac{F +\mathrm{Id}}{2}$
and $\frac{F -\mathrm{Id}}{2}$ are of rank $2$.  In fact, in the basis $\{e_1, e_2 = \Chi e_1, \xi, \nu\}$, the matrix  $\frac{F +\mathrm{Id}}{2}$ can be written as

\[  \frac 12 \left( \begin{array}{ccccc}
-h+1 & 0 & (V, e_2) &  (V, e_1) \\
0 & -h+1 & (V, e_1)& (V, e_2) \\
(V, e_2) & -(V, e_1) & h+1 & 0 \\

(V, e_1) & (V, e_2) & 0 & h+1 
\end{array} \right)\] 
Using that $h^2 + \Vert V \Vert^2 = 1$, one can check that it is of rank $2$. Same holds for $\frac{F -\mathrm{Id}}{2}$.
\end{proof}
\begin{rem}
Before giving some applications, we want to mention that both equivalent assertions of Theorem \ref{thmCM} are also equivalent to a third one described in terms of the Dirac operators $D^1$ and $D^2$, and the energy-momentum  tensors associated to $\varphi_1$ and $\varphi_2$. 
We recall that the energy-momentum tensors $Q_{\varphi_j}$, $j=1,2$, associated to the spinors field $\varphi_j$ are the $(2,0)$-tensors respectively defined by 
$$Q_{\varphi_j}(X,Y)=\frac{1}{2}\pre(\gamma_j(X)\cdot\nabla^j_{Y}\varphi+\gamma_j(Y)\cdot\nabla^j_{X}\varphi,\frac{\varphi}{|\varphi|^2}).$$
This third assertion can be written as:\\\\
{\it 3. There exists 2 $\spinc$ structures on $M$ carrying each one a  non-trivial spinor $\varphi_1$ and $\varphi_2$ of constant norms and satisfying
$$D^1\varphi_1=\frac{3}{2}H\varphi_1\ \ \ \text{and} \ \ \ \gamma_1(\xi)\varphi_1= - i\varphi_1,$$
$$D^2\varphi_2=-\frac{3}{2}H\varphi_2\ \ \ \text{and} \ \ \ \gamma_2(V)  \varphi_2 = -i \gamma_2(\xi) \varphi_2 + h \varphi_2,$$
so that their energy-momentum tensors $Q_{\varphi_1}$ and $Q_{\varphi_2}$ are the same. Moreover, the curvature 2-form of the connection on  the auxiliary bundle associated with these two  $\spinc$ structure are given by ($j \in \{1, 2\}$) 
$$\left\{
\begin{array}{l}
\Omega^j(e_1, e_2) = \frac 12 (-1)^{j-1}c_1 (h-1) -\frac 12 c_2 (h+1),\\
\Omega^j(e_1, \xi) = \frac 12 \Big( (-1)^{j-1} c_1 - c_2\Big) (e_1, V),\\
\Omega^j (e_2, \xi) = \frac 12 \Big( (-1)^{j-1} c_1 - c_2\Big) (e_2, V),
\end{array}
\right.
$$
in the basis $\{e_1,e_2=\Chi e_1,e_3=\xi\}$.}\\

Indeed, clearly, the second assertion of Theorem \ref{thmCM} implies assertion $3.$. Reciprocally, as proven in \cite{LR}, $D^1\varphi_1=\frac{3}{2}H\varphi_1$ with $\varphi_1$ of constant norm implies that $\nabla^1_X\varphi_1=-\frac{1}{2}\gamma_1(E_1X)\varphi_1$ with $E_1=Q_{\varphi_1}$. Similarly, we also get $\nabla^2_X\varphi_2=\frac{1}{2}\gamma_1(E_2X)\varphi_2$ with $E_2=Q_{\varphi_2}$. Now, since $Q_{\varphi_1} = Q_{\varphi_2}$, this gives assertion 2 of Theorem \ref{thmCM}. 
\end{rem}

\section{Totally geodesic and totally umbilical hypersurfaces of $\mathbb M_{1} (c_1) \times \mathbb M_2(c_2)$}
In this section, we use our main result, Theorem \ref{thmCM}, to give some geometric results on totally geodesic and umbilical hypersurfaces of $\mathbb M_{1} (c_1) \times \mathbb M_2(c_2)$.

\begin{lem}
Let $\big(M^3, g=(.,.)\big)$ be a totally umbilical hypersurface of $\mathbb M_{1} (c_1) \times \mathbb M_2(c_2)$. Then,
\begin{eqnarray}\label{meanV}
\Vert V \Vert \vert c_1 -c_2\vert = 4 \Vert dH \Vert
\end{eqnarray}

\end{lem}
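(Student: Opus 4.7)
The plan is to extract equation \eqref{meanV} directly from the Codazzi equation \eqref{codazzisr} under the totally umbilical assumption, bypassing any spinorial material.

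First I would write $II = H\,\mathrm{Id}$ so that
\[
d^{\nabla}II(X,Y) = dH(X)\,Y - dH(Y)\,X,
\]
and substitute this into \eqref{codazzisr}. Using that $f$ is symmetric (so $g(fY,Z)=g(Y,fZ)$), I would regroup the right-hand side of \eqref{codazzisr} into two pieces, one proportional to $\frac{c_1+c_2}{4}$ carrying the $f$-terms and one proportional to $\frac{c_1-c_2}{4}$ without $f$. Introducing the vector field
\[
W \;=\; dH^{\sharp} \;-\; \frac{c_1-c_2}{4}\,V,
\]
the Codazzi identity then takes the clean bivector form
\[
g(W,X)\,g(Y,Z) - g(W,Y)\,g(X,Z) \;=\; \frac{c_1+c_2}{4}\Bigl[g(V,X)\,g(fY,Z) - g(V,Y)\,g(fX,Z)\Bigr],
\]
valid for all $X,Y,Z \in \Gamma(TM)$.

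Next I would trace this identity over $(Y,Z)$ by setting $Y=Z=e_i$ in a local orthonormal frame and summing. On the left the trace produces $(3-1)g(W,X) = 2g(W,X)$. On the right, using $\sum_i g(V,e_i)\,e_i = V$ and the symmetry of $f$, the trace collapses to
\[
\frac{c_1+c_2}{4}\bigl[\mathrm{tr}(f)\,g(V,X) - g(fV,X)\bigr].
\]
By Lemma \ref{Fo}(\ref{as8}) combined with $(f\xi,\xi)=h$ (which follows from $f\xi = h\xi -\Chi V$ in Lemma \ref{Fo}(\ref{as6})), one has $\mathrm{tr}(f)=-h$, and by Lemma \ref{lem1} we have $fV=-hV$. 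Both trace terms therefore equal $-h\,g(V,X)$ and cancel, leaving $2g(W,X)=0$ for every $X$, hence $W \equiv 0$. This means
\[
dH^{\sharp} \;=\; \frac{c_1-c_2}{4}\,V,
\]
and taking norms immediately gives $4\|dH\| = |c_1-c_2|\,\|V\|$, which is \eqref{meanV}.

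The only step that requires real care is the algebraic reshuffling of the Codazzi right-hand side: six terms must be sorted into those symmetric vs.\ antisymmetric in the role of $f$ so that the $\frac{c_1+c_2}{4}$ and $\frac{c_1-c_2}{4}$ pieces split cleanly. Once that grouping is done, the trace argument is short and the identities $\mathrm{tr}(f)=-h$ and $fV=-hV$ from Section \ref{cm} force the vanishing of $W$ with no further input.
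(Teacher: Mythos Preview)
Your argument is correct and yields the sharper identity $dH^{\sharp}=\frac{c_1-c_2}{4}V$, from which \eqref{meanV} follows at once. The paper, however, takes a genuinely different route: it invokes Theorem \ref{thmCM} to produce the two induced $\spinc$ structures on $M$, then applies the spinorial Ricci identity to the Killing spinor $\varphi_2$, takes real parts of scalar products with $\varphi_2$, and reads off $dH(\xi)=0$ and $dH(e_k)=\frac{c_1-c_2}{4}(V,e_k)$ for $k=1,2$ using the relations \eqref{Ide2}--\eqref{Ide4} and the explicit curvature components of $\Omega^2$. Your approach is more elementary---it uses only the classical Codazzi equation \eqref{codazzisr} together with the structural identities $\mathrm{tr}(f)=-h$ and $fV=-hV$ from Lemmas \ref{lem1} and \ref{Fo}, and avoids the $\spinc$ machinery entirely. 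The paper's route, on the other hand, illustrates how the spinorial characterization developed in Theorem \ref{thmCM} can be used as a working tool, which is presumably the intended point of the section; your trace-of-Codazzi argument is shorter but does not showcase the main theorem.
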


{\bf Proof.} From Theorem \ref{thmCM}, we know that there exists 2 $\spinc$ structures on $M$ carrying each one a  non-trivial spinor $\varphi_1$ and $\varphi_2$  satisfying
$$\nabla^1_X\varphi_1=-\frac{1}{2}\gamma_1(EX)\varphi_1 = -\frac H2 \gamma_1(X)\varphi_1\ \ \ \text{and}\ \ \  \gamma_1(\xi)\varphi_1= - i\varphi_1.$$
$$\nabla^2_X\varphi_2=\frac{1}{2}\gamma_2(EX)\varphi_2= \frac H2 \gamma_2(X)\varphi_2\ \ \ \text{and}\ \ \ \ \gamma_2(V)  \varphi_2 = -i \gamma_2(\xi) \varphi_2 + h \varphi_2.$$
The curvature 2-form of the connection on  the auxiliary bundle associated with these two  $\spinc$ structure are given by ($j \in \{1, 2\}$) 
\begin{eqnarray}\label{Ome1}
\Omega^j(e_1, e_2) = \frac 12 (-1)^{j-1}c_1 (h-1) -\frac 12 c_2 (h+1),
\end{eqnarray}
\begin{eqnarray}\Omega^i (e_1, \xi) = \frac 12 \Big( (-1)^{j-1} c_1 - c_2\Big) (e_1, V),\label{omega2}\end{eqnarray}
\begin{eqnarray}\Omega^j (e_2, \xi) = \frac 12 \Big( (-1)^{j-1} c_1 - c_2\Big) (e_2, V),
\label{Ome3}\end{eqnarray}
in the basis $\{e_1,e_2=\Chi e_1,e_3=\xi\}$.\\\\
{\bf For the second Spin$^c$ structure:} The Ricci identity for $M$ can be written as 
$$\frac 12 \gamma_2(\mathrm{Ric}^M (X)) \varphi_2 - \frac i2 \gamma_2(X \lrcorner \Omega^2) \varphi_2 = \frac 12 \gamma_2 (dH)\gamma_2(X)\varphi_2 +\frac 32 dH(X) \varphi_2 + H^2 \gamma_2(X) \varphi_2 .$$
For $X =\xi$, the \ of the scalar product of the previous identity with $\varphi_2$ gives 
\begin{eqnarray*}
-\frac i2 \Omega^2 (\xi, e_1) (\gamma_2(e_1)\varphi_2, \varphi_2) &-& \frac i2 \Omega^2 (\xi, e_2) (\gamma_2(e_2)\varphi_2, \varphi_2) \\&=& \frac 12 \pre(\gamma_2 (dH)\gamma_2(\xi)\varphi_2, \varphi_2)+ \frac 32 dH(\xi)
\end{eqnarray*}
Using that $-\gamma_2(e_1)\gamma_2(e_2)\gamma_2(\xi)\varphi_2 = \varphi_2$, we get 

$$-\frac i2 \Omega^2 (\xi, e_1) (\gamma_2(e_1)\varphi_2, \varphi_2) - \frac i2 \Omega^2 (\xi, e_2) (\gamma_2(e_2)\varphi_2, \varphi_2) =  dH(\xi)$$
Finally, using (\ref{Ide2}), (\ref{Ide3}), (\ref{omega2}) and (\ref{Ome3}), we obtain  $dH(\xi) = 0$. In a similar way, for $X = e_1$ the real part of the scalar product with $\varphi_2$ of the Ricci identity gives 
\begin{eqnarray*}
-\frac i2 \Omega^2 (e_1, e_2) (\gamma_2(e_2)\varphi_2, \varphi_2) &-& \frac i2 \Omega^2 (e_1, \xi) (\gamma_2(\xi)\varphi_2, \varphi_2) \\&=& \frac 12 \pre(\gamma_2 (dH)\gamma_2(e_1)\varphi_2, \varphi_2) + \frac 32 dH(e_1).
\end{eqnarray*}
Using that $-\gamma_2(e_1)\gamma_2(e_2)\gamma_2(\xi)\varphi_2 = \varphi_2$ and $dH(\xi) = 0$, we get 
\begin{eqnarray*}
-\frac i2 \Omega^2 (e_1, e_2) (\gamma_2(e_2)\varphi_2, \varphi_2) - \frac i2 \Omega^2 (e_1, \xi) (\gamma_2(\xi)\varphi_2, \varphi_2) = dH(e_1).
\end{eqnarray*}
Finally, using (\ref{Ide2}), (\ref{Ide3}), (\ref{omega2}) and (\ref{Ome1}), we obtain  $dH(e_1) =  \frac {c_1-c_2}{4} g(V, e_1) $. In a similar way we can get $dH(e_2) =  \frac {c_1-c_2}{4} g(V, e_2) $. Hence, we have $\Vert dH\Vert^2 = \frac{(c_1-c_2)^2}{16}\Vert V \Vert^2.$ For consistency, one can also take the first Spin$^c$ structure and check that a similar identity can be obtained.  
\begin{prop}
\label{pr1}
Let $M$ be a totally umbilical hypersurface of in $\mathbb M_{1} (c_1) \times \mathbb M_1(c_1)$. Then $M$ is totally geodesic or an extrinsic hypersphere. Moreover, if $c_1 \neq 0$, the universal cover of $M$ is a Non-Einstein Sasaki manifold or a product of a K\"ahler manifold (of complex dimension 1) with $\mathbb R$. If $c_1 = 0$, then $M$ is a $\spin$ manifold with a parallel or Killing spin spinor.  
\end{prop}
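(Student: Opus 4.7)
The plan is to proceed in three stages: first deduce that $H$ must be constant, then handle the flat case $c_1 = 0$ by a $\spin$-structure argument, and finally analyse the curved case $c_1 \neq 0$ via the compatibility equations. \emph{Stage 1.} Apply the immediately preceding lemma with $c_1 = c_2$: the identity $\|V\|\,|c_1 - c_2| = 4\|dH\|$ collapses to $\|dH\| = 0$, so $H$ is locally constant. Combined with umbilicity ($II = H\,\mathrm{Id}$), this immediately yields the first assertion: either $H \equiv 0$ ($M$ totally geodesic) or $H$ is a nonzero constant ($M$ an extrinsic hypersphere).

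\emph{Stage 2 ($c_1 = 0$).} The formulas for $\Omega^j$ in Theorem~\ref{thmCM} vanish identically, as each coefficient is a multiple of $c_1$ or $c_2$; hence the auxiliary line bundles are flat and both induced $\spinc$ structures on $M$ coincide with the $\spin$ structure $M$ inherits from $\mathbb R^4$. The generalised Killing equation $\nabla^1_X \varphi_1 = -\tfrac{H}{2}\gamma_1(X)\varphi_1$ is then exactly the equation for a parallel $\spin$ spinor (when $H = 0$) or a Killing $\spin$ spinor of Killing number $-H/2$ (when $H \neq 0$).

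\emph{Stage 3 ($c_1 \neq 0$).} Specialising (\ref{Structure4})--(\ref{Structure6}) to $E = H\,\mathrm{Id}$ and invoking Lemma~\ref{chii}, one obtains
\begin{align*}
\nabla_X \xi &= H\,\Chi X, &
\nabla_X V &= H(hX - fX),\\
\nabla h &= -2HV, &
(\nabla_X f)Y &= H\bigl(g(Y, V) X + g(X, Y) V\bigr).
\end{align*}
If $H = 0$, all four tensors $\xi, V, f, h$ are parallel on $M$; a de Rham argument on the universal cover then yields a splitting $\widetilde M \cong N^2 \times \mathbb R$ along a unit parallel direction built from $\xi$ or $V$, and using Lemma~\ref{Fo} (in particular $JV = \Chi V$ and $\Chi \xi = 0$) one checks that the restriction of $\Chi$ endows $N^2$ with a Kähler structure. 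If $H \neq 0$, then $\xi$ is Killing of unit length (since $\Chi$ is skew by Lemma~\ref{Fo}(i)); differentiating the identity $\Chi f - f\Chi = \eta(\cdot) V + (V, \cdot)\xi$ from Lemma~\ref{Fo}(iv) and substituting the four displayed formulas above, one verifies the Sasakian identity $(\nabla_X \Chi)Y = H\bigl(\eta(Y) X - g(X,Y)\xi\bigr)$. After rescaling $g$ by $H^2$, this is a genuine Sasaki structure on the universal cover; the non-Einstein property follows by computing the Ricci tensor from the Gauss equation (\ref{gaussSR}) and observing that its $\xi$-eigenvalue differs from its horizontal eigenvalues as soon as $V \not\equiv 0$ (which, as $V \equiv 0$ would force $f = h\,\mathrm{Id}$ and thus $F = \pm\,\mathrm{Id}$, is automatic when $H \neq 0$).

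The main technical obstacle lies in Stage 3 for $H \neq 0$: algebraically verifying the Sasakian identity requires carefully combining the four nabla-identities so that all $V$- and $h$-dependent terms cancel, using repeatedly the algebraic constraints of Lemma~\ref{Fo}. Pinning down the non-Einstein character likewise requires explicit curvature bookkeeping from the Gauss equation (\ref{gaussSR}).
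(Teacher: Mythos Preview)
Your Stages 1 and 2 coincide with the paper's argument. Stage 3 is where you diverge: the paper does \emph{not} analyse the compatibility equations at all, but simply observes that the spinor $\varphi_1$ from Theorem~\ref{thmCM} is a parallel (if $H=0$) or real Killing (if $H\neq 0$) $\mathrm{Spin}^c$ spinor on $M$, and then invokes Moroianu's classification \cite{Moro1} of simply connected $\mathrm{Spin}^c$ manifolds carrying such spinors, which in dimension $3$ gives exactly the stated dichotomy (non-Einstein Sasaki in the Killing case, K\"ahler $\times\,\mathbb R$ in the parallel case). The point, implicit in the paper, is that when $c_1\neq 0$ the induced structure is genuinely non-spin (indeed $\Omega^1(e_1,e_2)=-c_1\neq 0$), which in Moroianu's classification is what excludes the Einstein alternative. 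So the paper's Stage 3 is a one-line citation, while yours is a direct geometric verification from the structure equations.

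Your route is viable and essentially correct, but can be simplified and tightened. First, since the proposition already \emph{assumes} $M$ is immersed, the Sasakian identity $(\nabla_X\Chi)Y = H\bigl(\eta(Y)X - g(X,Y)\xi\bigr)$ follows in one line from $\nabla^{\mathbb P}J = 0$ and the Gauss--Weingarten formulas (take the tangential part of $\nabla^{\mathbb P}_X(JY)=J\nabla^{\mathbb P}_X Y$); the detour through differentiating Lemma~\ref{Fo}(\ref{as4}) is unnecessary. Second, your argument that $V\not\equiv 0$ when $H\neq 0$ is correct but deserves to be spelled out: from $\nabla_X V = H(hX-fX)$, the hypothesis $V\equiv 0$ forces $f = h\,\mathrm{Id}$, which combined with Lemma~\ref{Fo}(\ref{as8}) (namely $fe_1=-he_1$) gives $h=0$, contradicting $h^2+\|V\|^2=1$. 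Finally, the rescaling $\tilde g = H^2 g$, $\tilde\xi = \xi/|H|$, $\tilde\eta = |H|\eta$, $\tilde\Chi = \Chi$ does produce a genuine Sasaki structure as you claim, since the Levi-Civita connection is unchanged by a constant conformal factor; this is worth stating explicitly rather than leaving as ``rescaling''.
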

\begin{proof}Let $M$ be a totally umbilical hypersurface of $\mathbb M_{1} (c_1) \times \mathbb M_1(c_1)$. We have from (\ref{meanV}) that $dH = 0$, so $H$ is constant. Assume that  $c_1 \neq 0$. If this constant $H$ is 0, $M$ has a parallel Spin$^c$ spinor and if $H \neq 0$, then $M$ has a Killing Spin$^c$ spinor. Form the classification of parallel and Killing Spin$^c$ spinors \cite{Moro1}, we get the desired result. If $c_1 =0$, then the curvature of the auxiliary line bundle defining the Spin$^c$ structure is zero and hence $M$ is a $\spin$ manifold with parallel or Killing spin spinor.
\end{proof}
\begin{prop}
\label{pr2}
Let M be a totally umbilical hypersurface of in $\mathbb M_{1} (c_1) \times \mathbb M_2(c_2)$ ($c_1 \neq c_2$) having a local product structure. Then $M$ is totally geodesic or an extrinsic hypersphere. If $c_1 \neq c_2\neq 0$. the universal cover of $M$ is a non-Einstein Sasaki manifold or a product of a K\"ahler manifold (of complex dimension $1$) with $\mathbb R$
\end{prop}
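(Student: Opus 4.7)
The plan is to follow closely the proof of Proposition \ref{pr1}, using the identity $\|V\|\,|c_1-c_2|=4\|dH\|$ from the preceding lemma. The decisive difference is that now $c_1\neq c_2$, so $|c_1-c_2|\neq 0$, and in order to conclude $dH=0$ I must exploit the local product structure hypothesis to force $V=0$.

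First, I would interpret ``having a local structure product'' as the condition that the restriction of the ambient product structure $F$ to $M$ induces a genuine splitting of $TM$, equivalently $f^{2}=\mathrm{Id}_{TM}$. By Equation (\ref{Structure1}) of Lemma \ref{lem1}, this is equivalent to $V=0$ (and then $h=\pm 1$ by Equation (\ref{Structure3})). Once $V=0$, the identity (\ref{meanV}) combined with $c_1\neq c_2$ immediately gives $dH=0$, so $H$ is constant. A totally umbilical hypersurface with constant mean curvature is either totally geodesic ($H=0$) or an extrinsic hypersphere ($H\neq 0$), which establishes the first assertion.

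For the classification of the universal cover when $c_1\neq 0\neq c_2$, I would use the two induced Spin$^{c}$ structures supplied by Theorem \ref{thmCM}. Since $V=0$, formulas (\ref{omega2}) and (\ref{Ome3}) give $\Omega^{j}(e_i,\xi)=0$ for $i=1,2$; only $\Omega^{j}(e_1,e_2)=\frac{1}{2}\bigl((-1)^{j-1}c_1(h-1)-c_2(h+1)\bigr)$ survives, and it is constant. With $E=H\,\mathrm{Id}$ the generalized Killing equation of Theorem \ref{thmCM} reads $\nabla^{j}_{X}\varphi_{j}=\mp\frac{H}{2}\gamma_{j}(X)\varphi_{j}$, so $\varphi_{j}$ is either a parallel Spin$^{c}$ spinor (when $H=0$) or a real Killing Spin$^{c}$ spinor (when $H\neq 0$) on an auxiliary bundle with constant curvature 2-form proportional to the Kähler form of the contact distribution. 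Invoking Moroianu's classification \cite{Moro1} of Riemannian manifolds carrying parallel or real Killing Spin$^{c}$ spinors, exactly as in the proof of Proposition \ref{pr1}, yields that the universal cover of $M$ is a non-Einstein Sasaki manifold or the Riemannian product of a Kähler manifold of complex dimension $1$ with $\mathbb{R}$.

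The main obstacle will be pinning down the precise algebraic translation of ``having a local structure product''; once one confirms that it amounts to $V=0$ (the natural condition ensuring that $f$ induces an integrable splitting of $TM$ and that the tangential part of $F\nu$ vanishes), the remainder of the argument transcribes mechanically from the proof of Proposition \ref{pr1}. The only cosmetic difference is that $\Omega^{j}$ is no longer forced to vanish, but its restricted shape (a constant multiple of $e_1\wedge e_2$) is precisely what is needed to place the hypersurface within the scope of Moroianu's classification.
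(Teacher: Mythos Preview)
Your proposal is correct and follows essentially the same route as the paper: the paper's proof simply asserts ``Since $V=0$'' (i.e., the local product structure hypothesis is taken to mean exactly what you identified via Lemma~\ref{lem1}), then applies (\ref{meanV}) to get $dH=0$, and finally invokes Moroianu's classification \cite{Moro1} of parallel and Killing $\Spinc$ spinors exactly as in Proposition~\ref{pr1}. Your additional remarks on the shape of $\Omega^j$ when $V=0$ are accurate but not needed for the paper's terse version of the argument.
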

\begin{proof} Let $M$ be a totally umbilical hypersurface of $\mathbb M_{1} (c_1) \times \mathbb M_2(c_2)$. Since $V= 0$, we have from (\ref{meanV}) that $dH = 0$, so $H$ is constant. If this constant is $0$, $M$ has a parallel Spin$^c$ spinor and if $H \neq 0$, then $M$ has a Killing Spin$^c$ spinor. From the classification of parallel and Killing Spin$^c$ spinors \cite{Moro1}, we get the desired result. 
\end{proof}
Using also Theorem \ref{thmCM}, one can also prove the following:
\begin{prop}\label{pr3}
Simply connected 3-dimensional homogeneous manifolds $\Ekt$ ($\tau \neq 0$), with 4-dimensional isometry group cannot be immersed in $\mathbb M_{1} (c_1) \times \mathbb M_1(c_1)$ as totally umbilical hypersurfaces.
\end{prop}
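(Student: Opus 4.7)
The plan is to argue by contradiction, combining the Gauss equation given by Theorem \ref{thmCM} with the rotational symmetry of the curvature of $\Ekt$ when $\tau\neq 0$. Suppose $M=\Ekt$ with $\tau\neq 0$ admits a totally umbilical immersion into $\mathbb{M}_1(c_1)\times\mathbb{M}_1(c_1)$, with shape operator $E=H\iid$. Since $c_1=c_2$, Equation (\ref{meanV}) immediately gives $dH=0$, so $H$ is constant on $M$.

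First I would expand the Gauss equation (\ref{gaussSR}) with $c_1=c_2=c$ and $E=H\iid$, using Lemma \ref{Fo} to express $f$ in the adapted basis $\{e_1,e_2=\Chi e_1,\xi\}$. A short computation produces the sectional curvatures
\[
K(e_1,e_2)=\tfrac{c}{2}(1+h^2)+H^2,\qquad K(e_i,\xi)=\tfrac{c}{2}g(e_i,V)^2+H^2\ \ (i=1,2).
\]
On the other hand $\Ekt$ with $\tau\neq 0$ (and $\kappa\neq 4\tau^2$, the round sphere being excluded by the $4$-dimensional isometry hypothesis) has $K(e_1,e_2)=\kappa-3\tau^2$ and $K(X,\xi)=\tau^2$ for every unit horizontal $X$. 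Imposing $K(e_i,\xi)=\tau^2$ for every orthonormal frame of $\xi^\perp$ forces $g(X,V)^2$ to be independent of the unit direction $X\in\xi^\perp$; since $X\mapsto g(X,V)$ is linear, a standard polarization argument yields $V\equiv 0$ on $M$. Equation (\ref{Structure3}) then gives $h^2=1$, and by continuity and connectedness $h$ is the constant $\pm 1$; take $h=1$ without loss of generality.

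With $V\equiv 0$ and $h=1$, Equation (\ref{Structure5}) reduces to $H(f-\iid)=0$ on $M$. Using Lemma \ref{Fo}(vi),(viii) together with $V=0$ one reads off $fe_i=-e_i$ for $i=1,2$ and $f\xi=\xi$, so $f$ has eigenvalues $\{-1,-1,1\}$ and in particular $f\neq\iid$. Hence $H\equiv 0$, i.e., $M$ is actually totally geodesic; substituting $V=0$ and $H=0$ back into the expression above gives $K(e_1,\xi)=0$, contradicting $K(e_1,\xi)=\tau^2\neq 0$.

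The main subtle step is the polarization forcing $V\equiv 0$: it relies essentially on the rotational symmetry of the horizontal Ricci eigenvalue of $\Ekt$, which is a feature of the underlying Sasaki-type structure. Everything after $V\equiv 0$ is a short mechanical check with the structure equations (\ref{Structure1})--(\ref{Structure6}) already established in the proof of Theorem \ref{thmCM}.
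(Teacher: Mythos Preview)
Your curvature computations from the Gauss equation are correct, and the overall strategy---force $V\equiv 0$, then use the structure equations to reach a contradiction---is sound and genuinely different from the paper's. The paper never touches the Gauss equation here: it observes that $\varphi_1,\varphi_2$ are real Killing $\Spinc$ spinors (since $H$ is constant), invokes the classification from \cite{NR12} of the only two $\Spinc$ structures on $\Ekt$ carrying such spinors, and reads off $\gamma_2(V)\varphi_2=(h+1)\varphi_2$, hence $V=0$; the contradiction is then that $\Ekt$ would have a local product structure.

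However, there is a real gap in your argument. You silently identify the vector field $\xi=-J\nu$ coming from the immersion with the Reeb (vertical) field $\xi_0$ of $\Ekt$, and this is what lets you assert $K(X,\xi)=\tau^2$ for every unit $X\in\xi^\perp$. In a $3$-manifold one has $K(X,\xi)=\mathrm{Ric}(X,X)+\mathrm{Ric}(\xi,\xi)-\tfrac{S}{2}$, so $K(\,\cdot\,,\xi)$ is constant on the unit circle in $\xi^\perp$ iff $\mathrm{Ric}|_{\xi^\perp}$ is a multiple of the identity, which in $\Ekt$ (with $\kappa\neq4\tau^2$) happens iff $\xi=\pm\xi_0$. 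Nothing you have written forces this. The same issue reappears at the end when you claim $K(e_1,\xi)=\tau^2$ to contradict $K(e_1,\xi)=0$. A secondary issue: when $c=0$ your polarization step gives no information on $V$ (the formula $K(X,\xi)=H^2$ does not involve $V$), so that case needs a separate one-line argument (constant sectional curvature).

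The gap can be closed within your framework. Compute $\mathrm{Ric}=(\tfrac{c}{2}+2H^2)g-\tfrac{ch}{2}f+\tfrac{c}{2}V^\flat\!\otimes V^\flat$ from the Gauss equation and diagonalise: its eigenvalues are $c+2H^2$, $\tfrac{c(1-h)}{2}+2H^2$, $\tfrac{c(1+h)}{2}+2H^2$. Matching with the Ricci spectrum $\{\kappa-2\tau^2,\kappa-2\tau^2,2\tau^2\}$ of $\Ekt$ forces (for $c\neq0$) either $h=\pm1$, hence $V=0$, or $h=0$ with $\|V\|=1$; in the latter case (\ref{Structure6}) gives $0=dh=-2HV$, so $H=0$, while the Ricci matching forces $H^2=\tau^2\neq0$, a contradiction. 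Once $V=0$ is established this way, the Ricci eigenvector for the simple eigenvalue is $\xi$, so $\xi=\pm\xi_0$ a posteriori, and your final step goes through.
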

\begin{proof}
For $c_1=0$, this has been proved by Lawn and Roth \cite{LR}, even without assuming the umbilicity. Assume that $c_1 \neq 0$ and $\Ekt$ can be immersed in a totally umbilical way in 
$\mathbb M_{1} (c_1) \times \mathbb M_1(c_1)$. By Proposition \ref{pr1}, we have that $H$ is constant and by Theorm \ref{thmCM}, $\Ekt$ has two $\Spinc$ structures 
carrying each one a  non-trivial spinor $\varphi_1$ and $\varphi_2$  satisfying
$$\nabla^1_X\varphi_1=-\frac{H}{2}\gamma_1(X)\varphi_1\ \ \ \text{and}\ \ \  \gamma_1(\xi)\varphi_1= - i\varphi_1.$$
$$\nabla^2_X\varphi_2=\frac{H}{2}\gamma_2(X)\varphi_2\ \ \ \text{and}\ \ \ \ \gamma_2(V)  \varphi_2 = -i \gamma_2(\xi) \varphi_2 + h \varphi_2.$$
The curvature 2-form of the connection on  the auxiliary bundle associated with these two  $\spinc$ structure are given by ($j \in \{1, 2\}$) 
$$\Omega^j(e_1, e_2) = \frac 12 (-1)^{j-1}c_1 (h-1) -\frac 12 c_2 (h+1),$$
$$\Omega^i (e_1, \xi) = \frac 12 \Big( (-1)^{j-1} c_1 - c_2\Big) (e_1, V),$$
$$\Omega^j (e_2, \xi) = \frac 12 \Big( (-1)^{j-1} c_1 - c_2\Big) (e_2, V),$$
in the basis $\{e_1,e_2=\Chi e_1,e_3=\xi\}$. We will call the first one $\spinc$ structure $t_1$ and the second one $t_2$.
Since $H$ is constant, these two spinors are in fact real Killing spinors. But, it is known \cite{NR12} that the manifold $\Ekt$ has only two $\Spinc$ structures carrying Killing spinors. The first one (call it $t_3$) carries a Killing spinor $\varphi$ with Killing constant $\frac {\tau}{2}$ and for which $\gamma_3(\xi)\varphi = -i \varphi, \Omega^3(e_1, e_2) = -(\kappa-4\tau^2)$ and $\xi\lrcorner\Omega^3 = 0$, where we denote by $\gamma_3$ and $i\Omega^3$ the Clifford multiplication and the curvature 2-form of the auxiliary line bundle associated to the structure $t_3$. The second one (let's call it $t_4$) also carries a Killing spinor $\varphi$ with Killing constant $\frac{\tau}{2}$ for which $\gamma_4(\xi)\varphi = i \varphi, \Omega^4(e_1, e_2) = (\kappa-4\tau^2)$ and $\xi\lrcorner\Omega^4 = 0$, where we denote by $\gamma_4$ and $i\Omega^4$ the Clifford multiplication and the curvature 2-form of the auxiliary line bundle associated to the structure $t_4$. By comparison, we must have that $t_1 = t_3$ and $t_2 = t_4$. Thus we get 
$$\gamma_2(V)  \varphi_2 = -i \gamma_2(\xi) \varphi_2 + h \varphi_2 = (h+1)\varphi_2.$$
Hence $V=0$. This means that $\Ekt$ has a local product, which is a contradiction.
\end{proof}
{\bf Acknowledgement:} 
This work was initiated in 2017 during the one month research stay of the first author at the ``Laboratoire d'Analyse et de Math\'ematiques Appliqu\'ees" (UMR 8050) of the University of  Paris-Est Marne-la-Vall\'ee. The first author gratefully acknowledges the support  and hospitality of the  University of  Paris-Est Marne-la-Vall\'ee.

\end{document}